\documentclass{siamart190516}
\usepackage{amsfonts, amsmath}
\usepackage{algorithm, algpseudocode}

\usepackage[numbers,sort&compress]{natbib}
\usepackage{multirow}
\usepackage{tikz}
\usepackage{cleveref}
\usepackage{subcaption}
\usepackage{bibentry}
\usepackage[group-separator={,}]{siunitx}
\numberwithin{equation}{section}

\title{Robust Parameter Inversion Using Adaptive Reduced Order Models\thanks{This material is based upon work supported by the National Science Foundation under Grant No. 1720305. Any opinions, findings, and conclusions or recommendations expressed in this material are those of the author(s) and do not necessarily reflect the views of the National Science Foundation.}}
\author{Drayton Munster \and Eric de Sturler}

\newcommand{\R}{\mathbb{R}}

\newcommand{\C}{\mathbb{C}}

\newcommand{\pd}[2]{\dfrac{\partial #1}{\partial #2}}
\newcommand{\abs}[1]{\left|#1\right|}
\newcommand{\norm}[1]{\abs{\abs{#1}}}
\newcommand{\eps}{\epsilon}

\newcommand{\inv}[1]{\left( #1 \right) ^{-1}}
\newcommand{\invt}[1]{\left( #1 \right) ^{-T}}
\newcommand{\del}{\nabla}
\newcommand{\dx}[1][x]{\,d#1}
\DeclareMathOperator{\trace}{trace}
\DeclareMathOperator{\range}{Range}
\DeclareMathOperator{\kernel}{Kernel}
\DeclareMathOperator*{\argmin}{\arg\!\min}
\newcommand{\expect}[1]{\mathbb{E} \left[ #1 \right]}



\def\h{\eta}



\def\x{\chi}

\def\y{\psi}

\newcommand{\Rn}[1]{\mathbb{R}^{#1}}

\def\wt{\widetilde}


\newcommand{\M}[1]{ \mathbf{#1} }  


\newcommand{\V}[1]{ \mathbf{#1} }    
\newcommand{\VE}[2]{\MakeLowercase{#1}_{#2}} 

\newcommand{\Vgreek}[1]{ \boldsymbol{#1} }    
\newcommand{\Vgh}{\Vgreek{\h}}

\newcommand{\Ss}[1]{\mathcal{#1}}

\def\i{\mathrm{i}}
\newcommand{\spatialvar}{\V{x}}
\newcommand{\timevar}{t}
\newcommand{\states}{\V{y}}
\newcommand{\observation}{m}
\newcommand{\observables}{\V{\observation}}
\newcommand{\param}{p}
\newcommand{\params}{\V{\param}}
\newcommand{\control}{u}
\newcommand{\controls}{\V{\control}}

\newcommand{\numparam}{{n_{\mathrm{p}}}}
\newcommand{\numstate}{n}
\newcommand{\numcontrol}{{n_{\mathrm{src}}}}
\newcommand{\numobservable}{{n_{\mathrm{det}}}}
\newcommand{\numreduced}{{n_\mathrm{r}}}
\newcommand{\dotA}{\M{A}}
\newcommand{\dotE}{\M{E}}
\newcommand{\dotC}{\M{C}}
\newcommand{\dotB}{\M{B}}
\newcommand{\transfer}{\Psi}
\newcommand{\reduced}[1]{#1_{\mathrm{R}}}
\newcommand{\trueeval}[1][\params]{F\left( #1 \right)}
\newcommand{\romeval}[2][]{{\reduced{F}^{(#1)}} \left( #2 \right)}
\newcommand{\estimate}[1][\params]{F_{\mathrm{E}}\left( #1 \right)}
\newcommand{\lightspeed}{\nu}
\newcommand{\frequency}{\omega}
\newcommand{\diffusion}{D(\spatialvar)}
\newcommand{\absorption}{\mu(\spatialvar)}
\newcommand{\descconst}[1][\frequency]{\frac{\i{#1}}{\lightspeed}}
\newcommand{\descriptor}[1][\params]{\descconst \dotE-\dotA\left(#1\right)}

\newcommand{\descK}{\M{K}}
\newcommand{\paramdomain}{\R^{\numparam}}
\newcommand{\spatialdomain}{\Ss{X}}
\newcommand{\numfreq}{{n_\frequency}}
\newcommand{\photonflux}{\eta(\spatialvar,\timevar)}
\newcommand{\lightinput}{g(\spatialvar, \timevar)}
\newcommand{\inputspatial}{\VE{b}{j}(\spatialvar)}
\newcommand{\inputsrc}{\inputspatial\VE{\control}{j}(\timevar)}
\newcommand{\boundaryrefl}{a}
\newcommand{\fourier}[1]{\widehat{#1}}
\newcommand{\allobservations}{\M{M}}
\newcommand{\alldata}{\M{D}}
\newcommand{\projV}{\M{V}}
\newcommand{\projW}{\M{W}}
\newcommand{\reducedE}{\reduced{\dotE}}
\newcommand{\reducedA}{\reduced{\dotA}}
\newcommand{\reducedB}{\reduced{\dotB}}
\newcommand{\reducedC}{\reduced{\dotC}}
\newcommand{\reducedK}{\reduced{\descK}}
\newcommand{\reduceddescriptor}[1][\params]{\descconst \reducedE-\reducedA\left(#1\right)}
\newcommand{\residual}[1][\params]{\M{R}\left( #1 \right)}
\newcommand{\redresidual}[1][\params]{\reduced{\M{R}}\left( #1 \right)}
\newcommand{\probreject}{\alpha}
\newcommand{\probact}{\beta}

\newcommand{\ens}{\begin{enumerate}}
\newcommand{\ene}{\end{enumerate}}

\newcommand{\its}{\begin{itemize}}
\newcommand{\ite}{\end{itemize}}

\newcommand{\des}{\begin{description}}
\newcommand{\dee}{\end{description}}

\newcommand{\ars}[1]{\left[ \begin{array}{#1}}
\newcommand{\are}{\end{array} \right] }
\newcommand{\oars}[1]{\begin{array}{#1}}
\newcommand{\oare}{\end{array}}
\newcommand{\rars}[1]{\left( \begin{array}{#1}}
\newcommand{\rare}{\end{array} \right) }

\newcommand{\eqs}{\begin{eqnarray}}
\newcommand{\eqe}{\end{eqnarray}}
\newcommand{\eqsn}{\begin{eqnarray*}}
\newcommand{\eqen}{\end{eqnarray*}}


\newcommand{\myGlobalTransformation}[2]
{
    \pgftransformcm{1}{0}{0.4}{0.5}{\pgfpoint{#1cm}{#2cm}}
}

\begin{document}
\maketitle
\begin{abstract}
Nonlinear parametric inverse problems appear in many applications and are typically very expensive to solve, especially if they involve many measurements. These problems pose huge computational challenges as evaluating the objective function or misfit requires the solution of a large number of parameterized partial differential equations, typically one per source term. Newton-type algorithms, which may be required for fast convergence, typically require the additional solution of a large number of adjoint problems.

The use of parametric model reduction may substantially alleviate this problem. In 
[de Sturler, E., Gugercin, S., Kilmer, M. E., Chaturantabut, S., Beattie, C., \& O’Connell, M. (2015). Nonlinear Parametric Inversion Using Interpolatory Model Reduction. SIAM Journal on Scientific Computing, 37(3)], interpolatory model reduction was successfully used to drastically speed up inversion for Diffuse Optical Tomography (DOT). However, when using model reduction in high dimensional parameter spaces, obtaining error bounds in parameter space is typically intractable. In this paper, we propose to use stochastic estimates to remedy this problem. At the cost of one (randomized) full-scale linear solve per optimization step we obtain a robust algorithm. Moreover, since we can now update the model when needed, this robustness allows us to further reduce the order of the reduced order model and hence the cost of computing and using it,  further decreasing the cost of inversion.
We also propose a method to update the model reduction basis that reduces the number of large linear solves required by 46\%-98\% compared to the fixed reduced-order model.
We demonstrate that this leads to a highly efficient and robust inversion method.

\end{abstract}
\begin{keywords}
nonlinear inverse problems, parametric model reduction, randomization
\end{keywords}
\section{Introduction\label{sec:Intro}}

Nonlinear parameter inversion involves finding a set of parameters that minimizes the difference (or misfit) between the output of a parametric forward model and measured data. This minimization requires many evaluations of the forward model. When the forward model is described by discretized partial differential equations (PDEs), the number of large linear solves may be computationally intractable. To reduce this cost, the forward model can be approximated with a Reduced-Order Model (ROM). In \cite{Kragel2005}, the author examines the use of a ROM for the trust region sub-problem based on Proper Orthogonal Decomposition (POD). Since this still requires evaluation of the Full Order Model (FOM) to compute the improvement ratio, a multi-level strategy based on a hierarchy of successively finer spatial discretizations was used to defer the cost of the (highest-order) FOM until close to the optimum. However, this still requires many evaluations using the full-order (or nearly so) model. Using interpolatory, projection-based ROMs for Diffuse Optical Tomography (DOT) has been investigated in \cite{DeSturler2015}, yielding a very efficient method. However, these results do not provide an a posteriori error bound without evaluating the misfit using the FOM. 
Certain classes of problems (for example, coercive parabolic and elliptical PDEs \cite{Alla2017,Qian2016}) and ROMs admit a posteriori error bounds in parameter space. When these bounds are not available, a globally certified reduced basis can be constructed with the ``greedy POD'' method \cite{Bui-Thanh2008,Qian2016} in an ``offline-online'' approach, where a global basis is constructed beforehand (``offline'') and reused for particular problems (``online''). This ``offline'' construction is typically also computationally intractable for high-dimensional parameter spaces since the reduced basis must be sufficiently accurate across the entire parameter space. 
To reduce the costs of this ``offline'' phase and the costs of using a larger ROM, various adaptive strategies have been examined. In \cite{Afanasiev2010}, the authors use an adaptive POD basis to approximate an optimal control. This basis is iteratively updated by computing an optimal control for the ROM and adding the (full-order) state corresponding to that control to the snapshot basis. The iteration stops when subsequent controls differ by less than a given tolerance. The interpolation of POD bases is another offline strategy, as examined in \cite{Amsallem2008,Borggaard2014} (and references therein). In \cite{Peherstorfer2015}, the authors incorporate new data during the online phase by expanding the DEIM basis with low-rank updates chosen to reduce the approximation error at a randomly selected subset of components, and in \cite{DrusSimZas_2014}, for non-parametric systems, a dynamical approach for building the reduced model basis is considered based on residual norms. 

In this paper, instead, we investigate the use of stochastic techniques for trace estimation to efficiently estimate the error at the current step and provide a robust optimization technique. A significant reduction in the number of large linear solves is achieved by using a small initial projection basis and using these estimates to indicate when the bases should be expanded. To further reduce the number of large linear solves, we propose an update scheme inspired by \cite{OConnell2017} to minimize the number of additional vectors used to extend the projection basis. Our approach can also be considered an extension of the residual norm-based approach in
\cite{DrusSimZas_2014}.

The paper is organized as follows. \Cref{sec:Background} introduces the DOT problem, interpolatory model reduction, trust region methods, and trace estimation techniques. We introduce our proposed algorithm using stochastic estimates to guide ROM updates in \cref{sec:GuideUpdates}. \Cref{sec:BasisUpdates} discusses two possible choices for efficiently updating the projection basis to improve the accuracy of the ROM. While this paper focuses on efficiency, \cref{sec:Propabilistic_Estimates} outlines the number of samples necessary for probabilistic guarantees of the accuracy of these estimates for problems that require more robustness. We explore some of the choices with numerical experiments and their performance implications in \cref{sec:Numerical_Experiments}. Finally, we provide conclusions and discuss future work in \cref{sec:Conclusions}.

\section{Background\label{sec:Background}}

We first describe the forward model for DOT and express this problem in the system theoretic notation used in the remainder of the paper. This section borrows heavily from \cite{DeSturler2015}. Next, we give background on the construction of reduced frequency response functions via interpolatory model reduction. The third subsection provides background on estimating the trace of a matrix and its application to estimating Frobenius norms.

\subsection{The DOT Problem\label{subsec:Background_DOT}}

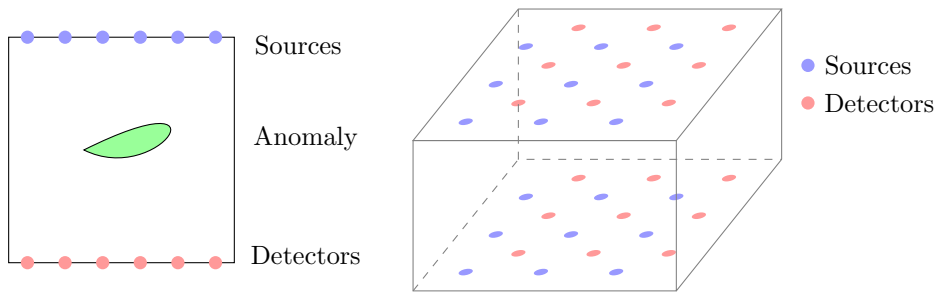
\begin{figure}[t]
\centering
\begin{minipage}{.4\textwidth}
\begin{tikzpicture}
\draw (0,0) rectangle (3,3);
\foreach \x in {.25, .75, 1.25, 1.75, 2.25, 2.75}
{
	\fill[blue!40!white] (\x,3) circle (.25em);
	\fill[red!40!white]  (\x,0) circle (.25em);
}
\fill[green!40!white, draw=black] (1,1.5) .. controls (3,2.5) and (2,1) .. cycle;
\node[align=left] at (3.85,2.9) {Sources};
\node[align=left] at (3.95,1.65) {Anomaly};
\node[align=left] at (3.95,0.1) {Detectors};
\end{tikzpicture}
\end{minipage}
\begin{minipage}{.55\textwidth}
\centering
\begin{tikzpicture}

\begin{scope}
\myGlobalTransformation{0}{0};

\foreach \x in {0.5, 1.5, 2.5} {
	\foreach \y in {0.5,1.5,2.5} {
		\fill[blue!40!white] (\x, \y) circle (.25em);
		\fill[red!40!white] (\x+.5, \y+.5) circle (.25em);	
	}
}
\end{scope}

\begin{scope}
\myGlobalTransformation{0}{2};
\draw [black!50] (0,0) rectangle (3.5,3.5);

\foreach \x in {0.5, 1.5, 2.5} {
	\foreach \y in {0.5,1.5,2.5} {
		\fill[blue!40!white] (\x, \y) circle (.25em);
		\fill[red!40!white] (\x+.5, \y+.5) circle (.25em);	
	}
}
\end{scope}
\draw [black!50] (0,0) rectangle (3.5,2) 
(3.5,0) -- (4.9,1.75)
(4.9,1.75) -- (4.9,3.75);
\draw [black!50,dashed] (0,0) -- (1.4,1.75)
(1.4,3.75) -- (1.4,1.75) -- (4.9,1.75);
\fill[blue!40] (5.25,3) circle (.25em);
\draw node at (6.05, 3) {Sources};
\fill[red!40] (5.25,2.5) circle (.25em);
\draw node at (6.2, 2.5) {Detectors};
\end{tikzpicture}
\end{minipage}
\caption{2D and 3D Geometries}
\label{fig:geometries}
\end{figure}

We use a diffusion model \cite{Arridge1999} for the photon flux, $\photonflux$, driven by an input light source $\lightinput$. 
In practice, the DOT problem is posed in the frequency domain. Here, we pose the problem in the time domain to motivate the use of parametric inversion techniques from a system theoretic perspective. Light is transmitted from one of $\numcontrol$ physically stationary sources, so we write $\lightinput = \inputsrc$ for the source locations $\inputspatial$, $j=1, \ldots, \numcontrol$. Observations are made using an array of $\numobservable$ detectors located along the boundary. See \cref{fig:geometries} for examples used in \cref{sec:Numerical_Experiments}. Let $\VE{\observation}{i}(\timevar)$ denote the observed flux at detector $i$ and time $\timevar$. With the above notation, we model the diffusion and absorption of light by the following time-dependent partial differential equation,
\begin{align}\label{eq:DOT_PDE1}
\frac{1}{\lightspeed} \pd{}{t}\photonflux &= \del \cdot \left( \diffusion\del\photonflux \right) - \absorption\photonflux + \inputsrc, \text{ for } \spatialvar \in \spatialdomain\\
0 &= \photonflux + 2 \boundaryrefl \diffusion \pd{}{\xi}\photonflux, \text{ for } \spatialvar \in \partial\spatialdomain \setminus \partial\spatialdomain_{\pm} \label{eq:DOT_PDE2}\\
0 &= \photonflux, \text{ for } \spatialvar \in \partial\spatialdomain_{\pm}\label{eq:DOT_PDE3}\\
\VE{\observation}{i}(\timevar) &= \int_{\partial \spatialdomain} \VE{c}{i}(\spatialvar)\photonflux \dx[\spatialvar]\label{eq:DOT_PDE4}.
\end{align}
Here, $\spatialvar$ refers to a spatial location in our image domain $\spatialdomain$, $\partial\spatialdomain_{\pm}$ refer to the top and bottom of the image domain (respectively), $\boundaryrefl$ is a constant defining the diffusive boundary reflection, $\diffusion$ and $\absorption$ refer to diffusion and absorption fields (respectively), $\xi$ refers to the outward unit normal on the boundary, and $\lightspeed$ is the speed of light in the medium.

The inverse problem associated with DOT is to reconstruct $\diffusion$ and $\absorption$, the diffusion and absorption fields (respectively), given a set of observations $\observables(t)$ made by illuminating the domain with a variety of source signals $\controls(\timevar)$. In this paper, we assume that the diffusion field $\diffusion$ is known (a common assumption in DOT breast tissue imaging). This leaves the reconstruction of the absorption field, $\absorption$. We represent this field with a finite number of (unknown) parameters, $\params = [\param_1, \ldots, \param_\numparam]^T$.
The choice of parameterization is crucial for physiologically relevant solutions to the inverse problem. In this setting, $\absorption$ is well-approximated by a piecewise constant, two-valued function. As for many inverse problems, the naive approach is ill-posed and extremely sensitive to noise in the measurements. To efficiently describe complex geometries with sharp boundaries and provide regularization, we use parametric level sets (PaLS), developed in \cite{Aghasi2011a} and used in the context of DOT imaging in \cite{Aghasi2011a,DeSturler2015}.

The spatial discretization of \cref{eq:DOT_PDE1,eq:DOT_PDE2,eq:DOT_PDE3,eq:DOT_PDE4} yields the system of differential algebraic equations,
\begin{align}
\frac{1}{\lightspeed}\dotE\dot{\states}(\timevar; \params) &= -\dotA(\params) \states(\timevar; \params) + \dotB\controls(\timevar), \label{eq:DOT_DAE_1}\\
\observables(\timevar; \params) &= \dotC^T \states(\timevar; \params), \label{eq:DOT_DAE_2}
\end{align}
where $\states$ denotes the discretized photon flux, $\observables = \left[ \observation_1, \ldots, \observation_\numobservable\right]^T$ is the vector of detector outputs, $\dotC^T\states$ approximates \cref{eq:DOT_PDE4} via quadrature, the columns of $\dotB$ represent discretizations of the sources $\inputspatial$, $\dotA(\params) = \dotA_0 + \dotA_1(\params)$, where $\dotA_0$ and $\dotA_1$ are discretizations of the diffusion and absorption terms, respectively. $\dotE$ includes the discretization of boundary terms such as the Robin condition \cref{eq:DOT_PDE2} and is singular as a result.
Let $\fourier{\states}(\frequency; \params),\fourier{\observables}(\frequency; \params), \fourier{\controls}(\frequency; \params)$ denote the Fourier transform of $\states(\timevar; \params),\observables(\timevar; \params)$, and $\controls(\timevar; \params)$, respectively. Taking the Fourier transform of \cref{eq:DOT_DAE_1,eq:DOT_DAE_2} yields
\begin{align}
\fourier{\observables}(\frequency; \params) &= \transfer(\frequency; \params) \fourier{\controls}(\frequency; \params), \text{\; where}\\
\transfer(\frequency; \params) &= \dotC^T \inv{\descK(\frequency; \params)} \dotB, \label{eq:transfer}
\end{align}
where $\frequency \in \R$ and $\descK(\frequency; \params) = \descriptor$. $\transfer(\frequency; \params)$ is the \emph{frequency response function} of the dynamical system \cref{eq:DOT_DAE_1}.

For a given frequency $\frequency_j$ and input source $i$, we denote the predicted observations by the forward model $\fourier{\observables}_i(\frequency_j; \params)$. For a given parameter vector $\params$, the predicted observations for all $\numcontrol$ input sources and $\numfreq$ frequencies are given by
\begin{equation}
\allobservations(\params) = \left[ \fourier{\observables}_1(\frequency_1; \params), \ldots, \fourier{\observables}_\numcontrol(\frequency_1; \params), \fourier{\observables}_1(\frequency_2; \params), \ldots, \fourier{\observables}_\numcontrol(\frequency_\numfreq; \params) \right],
\end{equation}
where $\allobservations(\params) \in \C^{\numobservable \times \numcontrol \cdot \numfreq}$. 
In our case, $\fourier{\controls}_i(\frequency_j) = \V{e}_i$, representing the excitation of source $i$ with a pure frequency $\frequency_j$. The evaluation of $\allobservations(\params)$ then reduces to the evaluation of the frequency response function for each frequency 
\begin{equation} \label{eq:fun_eval}
\allobservations(\params) = \left[\transfer(\frequency_1; \params), \ldots, \transfer(\frequency_\numfreq; \params)\right].
\end{equation}
Given the empirical data matrix of observations, $\alldata$, the optimization problem that must be solved is
\begin{equation}\label{eq:minimization}
\min_{\params \in \paramdomain} \norm{\residual}_F = \min_{\params \in \paramdomain} \norm{\allobservations(\params) - \alldata}_F.
\end{equation}

We assume $\alldata$ contains additive noise at a known noise level. Since PALS regularizes the problem \cite{Aghasi2011a}, no further regularization is necessary and we terminate the optimization when $\norm{\residual}$ is at the noise level (or slightly above).

Although the number of systems is independent of the number of parameters, each objective function evaluation requires $\min(\numobservable, \numcontrol)\cdot \numfreq$ large linear solves.
Since we use Newton-type methods to minimize \cref{eq:minimization}, it is also necessary to compute the Jacobian of the objective function. Differentiation of $\transfer(\frequency_k; \params)$ with respect to parameter $p_\ell$ yields
\begin{equation}
\label{eq:jacobian}
\frac{\partial}{\partial \param_\ell} \transfer(\frequency_k; \params) = -\dotC^T \inv{\descK(\frequency_k; \params)} \left(\frac{\partial}{\partial \param_\ell} \dotA(\params)\right) \inv{\descK(\frequency_k; \params)} \dotB.
\end{equation}
Since $\frac{\partial}{\partial \param_\ell} \dotA(\params)$ is diagonal and inexpensive to compute in our case, the bulk of the computational cost is in solving the systems $\descK(\frequency_k; \params)\M{X} = \dotB$ and $\left(\descK(\frequency_k; \params)\right)^T \M{Y} = \dotC$. Note that one of these solutions is already available from the objective function evaluation.

\subsection{Interpolatory Model Reduction\label{subsec:Background_ModelRed}}

Since solving \cref{eq:minimization} is dominated by the cost of computing \cref{eq:fun_eval} and \cref{eq:jacobian}, we build a surrogate frequency response function, $\reduced{\transfer}(\frequency; \params)$, that maintains a high-fidelity approximation to $\transfer(\frequency; \params)$ using techniques from projection based parametric model reduction \cite{Benner2015, DeSturler2015}.

Assuming the state, $\states$, evolves near the $\numreduced$-dimensional subspace $\range(\projV)$ for some appropriately chosen $\projV \in \C^{\numstate \times \numreduced}$, i.e.  $\states(\timevar; \params) \approx \projV\reduced{\states}(\timevar; \params)$ and given a full-rank $\projW \in \C^{\numstate \times \numreduced}$, we enforce the Petrov-Galerkin condition
$$ \projW^T \left( \frac{1}{\lightspeed} \dotE \projV \reduced{\dot{\states}}(\timevar; \params) + \dotA(\params) \projV \reduced{\states}(\timevar; \params) - \dotB \controls(\timevar) \right) = 0, \; \observables(\timevar; \params) = \dotC^T \projV \reduced{\states}(\timevar; \params)  $$
to yield the reduced system
\begin{equation}
\frac{1}{\lightspeed}\reduced{\dotE}\reduced{\dot{\states}}(\timevar; \params) = -\reduced{\dotA}(\params) \reduced{\states}(\timevar; \params) + \reduced{\dotB}\controls(\timevar), \label{eq:Reduced_DOT_DAE_1} \; \reduced{\observables}(\timevar; \params) = \reduced{\dotC}^T \reduced{\states}(\timevar; \params)
\end{equation}
and associated reduced frequency response function
\begin{equation}
\label{eq:Reduced_Transfer}
\reduced{\transfer}(\frequency; \params) = \reducedC^T \inv{\reduceddescriptor} \reducedB,
\end{equation}
where $\reducedE = \projW^T \dotE \projV$, $\reducedA = \projW^T \dotA \projV$, $\reducedB = \projW^T \dotB$, and $\reducedC = \projV^T \dotC$. We similarly define $\reducedK(\frequency; \params) = \reduceddescriptor$.


For an accurate ROM, we must choose appropriate $\projV$ and $\projW$ to satisfy desired fidelity requirements. A natural choice comes from the following result in interpolatory model reduction \cite{Baur2011},
\begin{theorem}
\label{thm:interpolation}
Suppose $\dotA(\params)$ is continuously differentiable in a neighborhood of $\params_0 \in \R^{\numparam}$. Let $\frequency \in \R$ and suppose both $\descK(\frequency; \params_0)$ and $\reducedK(\frequency; \params_0)$ be invertible. If
$\inv{\descK(\frequency; \params_0)}\dotB \subset \range\left(\projV\right)$ and
$\invt{\descK(\frequency; \params_0)}\dotC \subset \range\left(\projW\right)$, then the reduced model, $\reduced{\transfer}$, satisfies
\begin{align*}
\transfer(\frequency;\params_0) &= \reduced{\transfer}(\frequency; \params_0), \\
\del_\frequency\transfer(\frequency;\params_0) &= \del_\frequency\reduced{\transfer}(\frequency; \params_0), \text{\; and}\\
\del_\params\transfer(\frequency;\params_0) &= \del_\params\reduced{\transfer}(\frequency; \params_0).  
\end{align*}
\end{theorem}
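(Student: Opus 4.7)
The plan is to introduce the ``primal'' and ``dual'' full-order quantities
$\M{X}(\frequency;\params) = \descK(\frequency;\params)^{-1}\dotB$ and
$\M{Y}(\frequency;\params) = \descK(\frequency;\params)^{-T}\dotC$,
so that $\transfer(\frequency;\params) = \dotC^{T}\M{X} = \M{Y}^{T}\dotB$.
The hypotheses state precisely that $\M{X}(\frequency;\params_0) \in \range(\projV)$ and $\M{Y}(\frequency;\params_0) \in \range(\projW)$. Hence there exist coefficient matrices $\M{Z}_V,\M{Z}_W$ with
$\M{X}(\frequency;\params_0)=\projV\M{Z}_V$ and $\M{Y}(\frequency;\params_0)=\projW\M{Z}_W$.

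First I would establish the key identities that tie the reduced-order primal/dual to their full-order counterparts. Left-multiplying $\descK\projV\M{Z}_V=\dotB$ by $\projW^{T}$ gives $\reducedK\M{Z}_V = \reducedB$, so $\M{Z}_V = \reducedK(\frequency;\params_0)^{-1}\reducedB$; the analogous manipulation with the dual yields $\M{Z}_W = \reducedK(\frequency;\params_0)^{-T}\reducedC$. Invertibility of $\reducedK(\frequency;\params_0)$ is hypothesized, which is needed here. The equality $\transfer(\frequency;\params_0)=\reduced{\transfer}(\frequency;\params_0)$ then follows immediately from $\reduced{\transfer} = \reducedC^{T}\M{Z}_V = \dotC^{T}\projV\M{Z}_V = \dotC^{T}\M{X}$.

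Next I would handle the two derivative conditions by applying the resolvent identity $\partial(\descK^{-1}) = -\descK^{-1}(\partial\descK)\descK^{-1}$ and factoring the result through $\M{X}$ and $\M{Y}$. Because $\descK = \descconst\dotE - \dotA(\params)$, we have $\del_\frequency\descK = \tfrac{\i}{\lightspeed}\dotE$ and $\del_{\param_\ell}\descK = -\del_{\param_\ell}\dotA$, so
\begin{equation*}
\del_\frequency\transfer(\frequency;\params_0) = -\tfrac{\i}{\lightspeed}\M{Y}^{T}\dotE\M{X}, \qquad
\del_{\param_\ell}\transfer(\frequency;\params_0) = \M{Y}^{T}(\del_{\param_\ell}\dotA)\M{X}.
\end{equation*}
The continuous differentiability of $\dotA$ in a neighborhood of $\params_0$ is precisely what legitimizes the $\params$-derivatives of $\reducedA=\projW^{T}\dotA\projV$ under the projection. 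Carrying out the same calculation for $\reduced{\transfer}$ produces the expressions $-\tfrac{\i}{\lightspeed}\M{Z}_W^{T}\reducedE\M{Z}_V$ and $\M{Z}_W^{T}\projW^{T}(\del_{\param_\ell}\dotA)\projV\M{Z}_V$, respectively. Substituting $\projV\M{Z}_V=\M{X}$, $\projW\M{Z}_W=\M{Y}$, and $\reducedE = \projW^{T}\dotE\projV$ collapses these to the full-order expressions, establishing both derivative identities.

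The step I expect to be least mechanical is not an inequality or analytic estimate, but rather keeping the bookkeeping straight for the parameter derivative: one must verify that the ``frozen basis'' differentiation of $\reducedA(\params)=\projW^{T}\dotA(\params)\projV$ (with $\projV,\projW$ independent of $\params$) produces exactly the same oblique-projected form of $\del_{\param_\ell}\dotA$ that appears in the reduced derivative, so that the factorization through $\M{Z}_V$ and $\M{Z}_W$ really reproduces $\M{Y}^{T}(\del_{\param_\ell}\dotA)\M{X}$. Once this is checked, all three interpolation identities follow from the two primal/dual relations established in the first step.
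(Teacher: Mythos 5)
Your proof is correct and complete: the identities $\M{Z}_V=\reducedK^{-1}\reducedB$ and $\M{Z}_W=\reducedK^{-T}\reducedC$ obtained by projecting $\descK\projV\M{Z}_V=\dotB$ and $\descK^{T}\projW\M{Z}_W=\dotC$, combined with the resolvent identity, are exactly the standard argument for this result. The paper itself offers no proof --- it cites the interpolatory model reduction literature (Baur et al.) for \cref{thm:interpolation} --- and your derivation is precisely the argument given there, so there is nothing to flag.
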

So, both function and Jacobian evaluations exactly match at any point where these conditions are met. This gives us a recipe for constructing the projection and test spaces. For a set of interpolation points, $\pi_1, \ldots, \pi_k$, we construct $\projV$ from a (numerically stable) basis for $$\range\left(\left[\inv{\descK(\frequency_1; \pi_1)}\dotB, \ldots, \inv{\descK(\frequency_1; \pi_k)}\dotB,  \ldots, \inv{\descK(\frequency_{\numfreq}; \pi_k)}\dotB\right]\right)$$ and we construct $\projW$ from a (numerically stable) basis for $$\range\left(\left[\invt{\descK(\frequency_1; \pi_1)}\dotC, \ldots, \invt{\descK(\frequency_1; \pi_k)}\dotC,  \ldots, \invt{\descK(\frequency_\numfreq; \pi_k)}\dotC\right]\right).$$

In general, where these conditions are not satisfied, there is a difference between the reduced-order and the full-order model's function evaluation. The following theorem, a minor extension from \cite[Theorem 3.1]{Beattie2012}, relates the error in the reduced frequency response function to linear system residuals for any vector in $\range(\dotB)$ (the notation has been modified to fit this paper). This theorem motivates a very efficient approach to control the error between reduced order model and full order model, discussed in \cref{subsec:Updates_Residual}. 

Given two subspaces of $\C^n$, say $\Ss{M}$ and $\Ss{N}$, we can define the (sine of the) angle\footnote{Note that this definition is not symmetric unless $\dim\Ss{M} = \dim\Ss{N}$.} between these subspaces, $\Theta(\Ss{M}, \Ss{N}) \in [0, \frac{\pi}{2}]$, as $$\sup_{\V{x} \in \Ss{M}} \inf_{\V{y} \in \Ss{N}} \frac{\norm{\V{y} - \V{x}}_2}{\norm{\V{x}}_2} = \sin \Theta(\Ss{M}, \Ss{N}).$$ 
\begin{theorem}
\label{thm:forward_error}
Given the full order frequency response function, $\transfer(\frequency, \params)$ and reduced order frequency response function,
$\reduced{\transfer}(\frequency, \params)$, the
tangential interpolation error at $(\frequency, \params)$ in the (arbitrary)
direction $\V{z} \in \Rn{ n_\mathrm{src} }$ is given by
\begin{equation}
  \norm{\reduced{\transfer}(\frequency, \params)\V{z} - \transfer(\frequency, \params)\V{z}}_2
  \leq
  \norm{\dotC^T \inv{\descK(\frequency; \params)}}_2 
  \frac{ \sin \Theta\left( \Ss{C}(\frequency,\params), \Ss{W} \right) }{ \cos \Theta\left( \widehat{\Ss{B}}(\frequency,\params), \Ss{W} \right) }
  \norm{\Vgh}_2,
\end{equation}
where\footnotemark
\begin{align*}
  \Vgh & = \descK(\frequency; \params) \projV \V{t} -  \dotB\V{z} \quad \mbox{with} \\
  \V{t} & = \argmin_{ \wt{\V{t}} \in \Rn{n_\mathrm{r}} }
      \left\| \descK(\frequency; \params) \projV \wt{\V{t}} - \dotB\V{z}  \right\|_2 ,
\end{align*}
\footnotetext{Equivalently, $\Vgh$ is the smallest residual for $\descK(\frequency; \params) \V{x} = \dotB\V{z}$ with the constraint $\V{x} \in \range(\projV)$.}
$\widehat{\Ss{B}}(\frequency,\params) = \range \left(\descK(\frequency; \params) \projV \right)$,
$\Ss{C}(\frequency,\params) = \left(\kernel\left( \dotC^T \inv{\descK(\frequency; \params)}\right) \right)^\perp$, and
$\Ss{W} = \range \left(\projW\right)$.
\end{theorem}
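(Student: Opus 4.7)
The plan is to rewrite the tangential interpolation error as $\dotC^T \descK(\frequency;\params)^{-1}\V{r}$ for the Petrov--Galerkin residual $\V{r}$, and then to bound $\V{r}$ in two stages: first relating $\V{r}$ to the minimum-residual vector $\Vgh$ on $\range(\projV)$, and then exploiting the fact that $\dotC^T \descK^{-1}$ only ``sees'' the component of $\V{r}$ lying in $\Ss{C}(\frequency,\params)$.

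Abbreviating $\descK = \descK(\frequency;\params)$, $\widehat{\Ss{B}} = \widehat{\Ss{B}}(\frequency,\params)$, and $\Ss{C} = \Ss{C}(\frequency,\params)$, direct expansion of the reduced transfer function gives
\begin{equation*}
\left(\transfer(\frequency;\params) - \reduced{\transfer}(\frequency;\params)\right)\V{z} = \dotC^T \descK^{-1}\V{r}, \qquad \V{r} = \dotB\V{z} - \descK\projV\V{x}_r,
\end{equation*}
where $\V{x}_r = (\projW^T\descK\projV)^{-1}\projW^T\dotB\V{z}$. The Petrov--Galerkin condition forces $\projW^T\V{r}=0$, i.e., $\V{r}\in\Ss{W}^\perp$. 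Set $\V{v} = -\Vgh = \dotB\V{z} - \descK\projV\V{t}$; the optimality of $\V{t}$ gives $\V{v}\perp\widehat{\Ss{B}}$, while $\V{r}-\V{v} = \descK\projV(\V{t}-\V{x}_r)\in\widehat{\Ss{B}}$. Combined with $\V{r}\in\Ss{W}^\perp = \kernel(P)$, these yield $\V{r}=(I-P)\V{v}$, where $P = \descK\projV(\projW^T\descK\projV)^{-1}\projW^T$ is the oblique projector with range $\widehat{\Ss{B}}$ and kernel $\Ss{W}^\perp$.

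Applying the classical identity $\|I-P\|_2 = \|P\|_2 = 1/\cos\Theta(\widehat{\Ss{B}},\Ss{W})$ for a nontrivial oblique projector then gives $\|\V{r}\|_2 \leq \|\Vgh\|_2/\cos\Theta(\widehat{\Ss{B}},\Ss{W})$. Since $\Ss{C}^\perp = \kernel(\dotC^T\descK^{-1})$, the image under $\dotC^T\descK^{-1}$ depends only on the orthogonal projection $P_\Ss{C}\V{r}$, so $\|\dotC^T\descK^{-1}\V{r}\|_2 \leq \|\dotC^T\descK^{-1}\|_2\,\|P_\Ss{C}\V{r}\|_2$. To bound $\|P_\Ss{C}\V{r}\|_2$ I would use $\V{r}\perp\Ss{W}$: for any $\V{w}\in\Ss{W}$,
\begin{equation*}
\|P_\Ss{C}\V{r}\|_2^2 = \langle P_\Ss{C}\V{r}, \V{r}\rangle = \langle P_\Ss{C}\V{r}-\V{w}, \V{r}\rangle \leq \|P_\Ss{C}\V{r}-\V{w}\|_2\,\|\V{r}\|_2,
\end{equation*}
and taking the minimizing $\V{w}\in\Ss{W}$ together with the definition of the subspace angle gives $\min_{\V{w}\in\Ss{W}}\|P_\Ss{C}\V{r}-\V{w}\|_2 \leq \sin\Theta(\Ss{C},\Ss{W})\,\|P_\Ss{C}\V{r}\|_2$, hence $\|P_\Ss{C}\V{r}\|_2 \leq \sin\Theta(\Ss{C},\Ss{W})\,\|\V{r}\|_2$. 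Chaining the three inequalities delivers the stated bound.

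The main obstacle I expect is the bookkeeping around the non-symmetric subspace-angle convention flagged in the footnote: the oblique-projection identity and the best-approximation inequality must be invoked with the subspaces in the correct order so that the $\cos$ and $\sin$ factors appear exactly as in the statement (in particular with the arguments of $\Theta$ taken in the right sense when $\dim\widehat{\Ss{B}}$, $\dim\Ss{C}$, and $\dim\Ss{W}$ differ). The ``minor extension'' over \cite{Beattie2012} to an arbitrary tangential direction $\V{z}\in\Rn{n_\mathrm{src}}$ is handled by linearity throughout and enters nowhere essential in the argument.
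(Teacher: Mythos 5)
Your proof is correct and takes essentially the same route as the paper: the paper's own proof consists only of the observation that $\descK(\frequency;\params)\projV\V{t} = \dotB\V{z}+\Vgh$ lies in $\widehat{\Ss{B}}(\frequency,\params)$ followed by a citation to the proof of \cite[Theorem 3.1]{Beattie2012}, and what you have written out --- the Petrov--Galerkin residual expressed as $\V{r}=(\M{I}-\M{P})(-\Vgh)$ for the oblique projector $\M{P}=\descK\projV\inv{\projW^T\descK\projV}\projW^T$, the identity $\norm{\M{I}-\M{P}}_2=\norm{\M{P}}_2=1/\cos\Theta\left(\widehat{\Ss{B}},\Ss{W}\right)$, and the $\sin\Theta\left(\Ss{C},\Ss{W}\right)$ step exploiting $\V{r}\perp\Ss{W}$ --- is precisely the content of that cited argument. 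You also keep the asymmetric angle convention in the correct order (supremum over $\Ss{C}$, infimum over $\Ss{W}$), so no gaps remain.
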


\begin{proof}
Note that $\left(\descriptor\right) \projV \V{t} = \dotB \V{z} + \Vgh \in \widehat{\Ss{B}}(\frequency, \params)$. The rest of the proof follows identically from the proof of \cite[Theorem 3.1]{Beattie2012}.
\end{proof}

In practice, it is not efficient to compute $\norm{\dotC^T \inv{\descK(\frequency; \params)}}_2$  or the angle between $\mathcal{C}(\frequency, \params)$ and $\Ss{W}$ since these are equivalent in work to a full-order function evaluation. However, \cref{thm:forward_error} shows that the relative error in the function evaluation is bounded by the relative residuals norms (up to an unknown constant). 
Hence, to reduce the interpolation error at a point to a given tolerance at minimum cost, that is, with the smallest possible basis extension, we should include the directions with largest tangential interpolation error. We discuss this further in \cref{subsec:Updates_Residual}.

\subsection{Trust region methods} \label{subsec:Background_TR}
Robust Newton-type methods use a local model of the objective function to compute a candidate
update. This is most commonly done using a line search or a trust region.  Here, we focus on trust region methods \cite{Conn2000} and a local model based on the Gauss-Newton approximation.
Let $F(\params) = \frac{1}{2} \norm{\residual}_F^2$ be the objective function, where
$\residual$ is the nonlinear residual from ($\ref{eq:minimization}$).
Furthermore, let $\M{J}(\params)$ be the Jacobian of the nonlinear residual.
Then, given a current approximation $\params_c$, corresponding residual
$\V{r}_c$, and Jacobian $\M{J}_c$, the local (Gauss-Newton) model is given by
\begin{equation}
\label{eq:TR_locmod}
  m_\mathrm{GN}(\params)  =   \frac{1}{2}\left(\V{r}_c^T \V{r}_c)
    + \V{r}_c^T \M{J}_c\right(\params - \params_c) + \frac{1}{2} (\params - \params_c) \M{J}_c^T\M{J}_c (\params - \params_c) .
\end{equation}
To compute a candidate update, this local model is (approximately) minimized over a neighborhood of $\params_c$. We use the regularized trust region method TREGS \cite{DeSturler2011} which uses a truncated SVD combined with a GCV-like condition to compute a
candidate update $\V{s} = \params -\params_c$.
Trust-region methods subsequently accept the candidate update
if the improvement of the objective function is larger
than a chosen sufficient improvement predicted by the trust region model, in our case,
\begin{equation} \label{eq:ImprovCheck}
  F(\params_c) - F(\params_c+\V{s})  \geq 
    \rho_k  ( m_\mathrm{GN}(\params_c) - m_\mathrm{GN}(\params_c+\V{s}) ),
\end{equation}
for a chosen parameter $\rho_k$ \cite{Conn2000}.
To avoid expensive function and Jacobian evaluations, we replace the objective function and its derivatives in the Gauss-Newton model by approximations using an interpolatory ROM \cite{DeSturler2015, Borcea2014}.
This drastically reduces the computational cost \cite{DeSturler2015} but introduces a number of complications.

First, in general, our parametric ROM does not satisfy the usual requirement in trust-region methods that the Gauss-Newton model and its gradient are exact at the current parameter point. This problem can be remedied by using so-called conditional models \cite[Chapter 9]{Conn2000}\cite{Yue2013}. Using conditional models, convergence can be proved if (1) the first order accuracy at the current point can be restored (exact or to sufficient accuracy) in a finite number of model improvements, and (2) for a sufficiently small trust region size, steps that satisfy \cref{eq:ImprovCheck} can be guaranteed.
Condition (2) is a standard condition for trust-region methods and is satisfied by TREGS. Condition (1) can be satisfied in a single step by adding the current point as an interpolation point to the ROM.
While this is always possible, adding an interpolation point is quite expensive, 
as it requires the solution of a large linear system for
each source and frequency combination and an adjoint (transpose) solve for
each detector and frequency combination. 

However, to determine the need for conditional model improvement requires either (1) a full function evaluation or (2) a global error bound on the ROM approximation \cite{Conn2000,Yue2013}. For interpolatory
model reduction in a high dimensional parameter space, the cost of providing
an error bound over the entire parameter region of interest is, in general, prohibitive \cite{Bui-Thanh2008}.
We propose to use, instead, randomized estimates of the objective function to decide when to improve the ROM; we discuss this further
in \cref{sec:GuideUpdates,sec:BasisUpdates}.


\subsection{Trace Estimation\label{subsec:Background_TraceEst}}

Recall that for $A \in \R^{m \times n}$,
$$\trace(\M{A}^T \M{A}) = \norm{\M{A}}_F^2. $$
Therefore, we can estimate the Frobenius norm of $\M{A}$ by estimating the trace of $\M{A}^T\M{A}$.

Let $\V{s} \in \R^{n}$ be a vector with independent, identically distributed entries with mean zero and unit variance. Then, we have
\begin{align*}
\expect{\norm{\M{A}\V{s}}_2^2} &= \expect{\V{s}^T \M{A}^T \M{A}\V{s}} =\sum_{i=1}^{n} \sum_{j=1}^{n} (\M{A}^T \M{A})_{ij} \expect{\VE{s}{i} \VE{s}{j}}\\
 &= \sum_{i=1}^{n} (\M{A}^T \M{A})_{ii} \expect{\VE{s}{i}^2} + \sum_{\substack{i=1,j=1 \\i \neq j}}^{n} (\M{A}^T \M{A})_{ij} \expect{\VE{s}{i} \VE{s}{j}}.
\end{align*}
Since $\VE{s}{i}$ and $\VE{s}{j}$ are independent and have mean zero, $\expect{\VE{s}{i} \VE{s}{j}} = \expect{\VE{s}{i}} \expect{\VE{s}{j}} = 0$. Since $\VE{s}{i}$ also has unit variance, $\expect{\VE{s}{i}^2} = 1$. Thus,
$$
\expect{\norm{\M{A}\V{s}}_2^2} = \trace(\M{A}^T\M{A}) = \norm{\M{A}}_F^2.$$
By selecting $\VE{s}{i}$ from the Rademacher distribution $\left[P(\VE{s}{i} = x) = \begin{cases}1/2 & x=1\\1/2 & x = -1\end{cases}\right]$, we obtain the Hutchinson trace estimator which minimizes the variance of the sample \cite{Hutchinson1990}.

\section{Estimating ROM Accuracy and Guiding Updates\label{sec:GuideUpdates}}

Since the objective function \cref{eq:minimization} involves the Frobenius norm, we can use the Hutchinson trace estimator to estimate the objective function.
This offers significant computational savings over the full function evaluation. Consider the evaluation of the following estimate. Given a sample $\V{s} \in \R^{\numcontrol}$ drawn from the Rademacher distribution,
\begin{align*}
\left(\transfer(\frequency_j; \params) - \alldata_j \right)\V{s} &= \left(\dotC^T \inv{\descK(\frequency_j; \params)}\dotB - \alldata_j\right)\V{s}\\
&= \dotC^T \inv{\descK(\frequency_j; \params)}\left(\dotB \V{s}\right) - \alldata_j \V{s}.
\end{align*}
Whereas the (full-order) objective function evaluation requires solving $\min(\numobservable, \numcontrol)$ large linear systems per frequency, this trace estimate requires the solution of only 1 large system per sample and per frequency.
While other methods use this estimate directly in the optimization \cite{Haber2012,Aslan2017}, we instead use it only to assess the quality of our reduced order model. In particular, we only need the estimate to indicate whether there is a significant difference between the reduced-order objective function and the full-order objective function evaluation.

Equipped with an efficient method to estimate the difference between the full-order and reduced-order objective function evaluations, we combine the ROM with our optimization routine, and we update the ROM if our estimates suggest a significant difference. For an iteration index $k$, let $\romeval[k]{\params}$ denote the objective function computed using the reduced order model generated from $\projV^{(k)}$ and $\projW^{(k)}$, $\estimate$ denote the trace estimate of $\trueeval$, and $d(\romeval[k]{\params}, \estimate)$ denote our metric for model quality. Then given a starting point $\params_0$, initial projection bases $\projV^{(0)},\projW^{(0)}$, optimizer tolerance tol$_\mathrm{o}$, model quality tolerance tol$_\mathrm{q}$, and an update strategy (discussed in \cref{sec:BasisUpdates}), we may apply \Cref{alg:rom-optim}.

\begin{algorithm}[ht]
\begin{algorithmic}
\State $\params_\mathrm{c} \gets \params_0, f_\mathrm{c} \gets \romeval[0]{\params_\mathrm{c}}, k \gets 0$
\While{$f_\mathrm{c} \geq \text{tol}_\mathrm{o}$}
\State $\params_\mathrm{p} \gets $ Compute Trust Region Candidate Point Using $\reduced{F}^{(k)}$
\State $f_\mathrm{p} \gets \romeval[k]{\params_\mathrm{p}}$
\State $f_{\mathrm{est}} \gets \estimate[\params_\mathrm{p}]$
\If{$d(f_\mathrm{p},f_{\mathrm{est}}) \geq \text{tol}_\mathrm{q}$}
\State Reject $\params_p$
\State $\projV^{(k+1)}, \projW^{(k+1)} \gets \text{update}(\projV^{(k)}, \projW^{(k)}, \params_\mathrm{c}, \params_\mathrm{p})$
\State $f_\mathrm{c} \gets \romeval[k+1]{\params_c}$
\State $k \gets k+1$
\Else
\If{Trust Region Algorithm Accepts $\params_\mathrm{p}$}
\State $\params_\mathrm{c} \gets \params_\mathrm{p}$
\State $f_\mathrm{c} \gets f_\mathrm{p}$
\EndIf
\EndIf
\EndWhile
\end{algorithmic}
\caption{ROM-Based Optimization with Estimate-Driven Updates}
\label{alg:rom-optim}
\end{algorithm}

In this paper: tol$_\mathrm{o}$ is set to 1.1 times the noise level (based on the discrepancy principle), TREGS \cite{DeSturler2011} is used as the optimization algorithm, and the reduced order model is rejected when $\estimate[\params]/\romeval[k]{\params} \geq 10$ at the proposed point.

 \section{Basis Updates}\label{sec:BasisUpdates}
When $d(f_\mathrm{p}, f_{\mathrm{est}}) \geq \text{tol}_\mathrm{q}$, it is necessary to update the projection bases. Below, we examine two possibilities: (1) using the interpolatory conditions from \cref{thm:interpolation} to make the reduced-order model exact at either the proposed or current point;  (2) coupling the residual-based error bounds from \cref{thm:forward_error} with techniques from subspace recycling \cite{OConnell2017, Ahuja2012, Ahuja2015} to add directions corresponding to large residuals (in the sense of \Cref{thm:forward_error}). We discuss performance comparisons in \cref{sec:Numerical_Experiments}.

\subsection{Full Interpolatory Updates\label{subsec:Updates_Full}}

When the objective function estimate suggests that the reduced and full order models differ significantly, we can use \cref{thm:interpolation} to make the reduced order model match the full order model at that point.  This has the cost of evaluating $\inv{\descK(\frequency; \params)}\dotB$ and $\left(\descK(\frequency; \params)\right)^{-T}\dotC$ for all frequencies $\omega$. In terms of large system solves, this at least the cost of 2 (full-order) function evaluations. In exchange, the reduced order model will match the full order model to first order at the given point. If we use the current point to update, this guarantees that the optimizer will choose the same step direction as the full order model.

As discussed in \cref{subsec:Background_TR}, the conditional model framework can be used to \emph{suggest} convergence. While interpolatory updates can be used to make the reduced-order model exact in a single step, the error estimates available through \cref{subsec:Background_TraceEst} are only probabilistic. We believe the ``relaxed first-order condition'' from \cite{Yue2013} can be extended to a probabilistic setting, and we will explore this in a future paper.

While \cref{thm:interpolation} suggests adding (in general) $\numfreq \cdot \left(\numobservable + \numcontrol\right)$ vectors when an updated is needed, our experiments suggest that these full updates are too expensive and contain a significant amount of redundant information. For example, in our 3D experiments (\cref{subsec:Experiements_3D}), \cref{thm:interpolation} implies that $1\cdot(961 + 961) = 1922$ vectors (and thus large linear solves) are sufficient for each update. 
However, at the first rejection point, only 170 basis vectors are added after a rank revealing factorization, while the remainder are (numerically) linearly dependent. This indicates that there is a much smaller bases expansion that will produce a reduced order model of essentially the same accuracy.

\subsection{Residual Driven Updates\label{subsec:Updates_Residual}}

To reduce the cost of updating the model, we turn to \cref{thm:forward_error} for motivation. This theorem indicates that reducing the norm of the residual in a given direction will reduce the interpolation error in that direction. More directly, we bound the interpolation error in $\residual$ as follows.

\begin{theorem}
\label{thm:residual_bound}
Let $\M{T}_j = \descK(\frequency_j; \params) \projV \inv{\reducedK(\frequency_j; \params)} \projW^T$, i.e., the projection onto $\range\left(\descK(\frequency_j; \params) \projV \right)$ along $\projW$, and define $\Vgh_j = \left(\M{I} - \M{T}_j\right)\dotB$. Define $\M{H} = [\Vgh_1, \ldots, \Vgh_{\numfreq}]$. Then \begin{align*}
  \abs{\,\norm{\residual}_F - \norm{\redresidual}_F\,} \leq \kappa(\params) \norm{\M{H}}_F,
\end{align*}
where $\kappa(\params)$ does not depend on $\projV, \projW$.
\end{theorem}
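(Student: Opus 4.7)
The plan is to reduce everything to a frequency-by-frequency bound on $\transfer - \reduced{\transfer}$, then combine across frequencies using the Frobenius norm. First, apply the reverse triangle inequality to get
\begin{equation*}
  \abs{\,\norm{\residual}_F - \norm{\redresidual}_F\,}
  \leq \norm{\residual - \redresidual}_F
  = \norm{\allobservations(\params) - \reduced{\allobservations}(\params)}_F,
\end{equation*}
since $\alldata$ cancels. Because $\allobservations$ stacks the frequency response evaluations block-columnwise, this last quantity equals $\bigl(\sum_{j=1}^{\numfreq} \norm{\transfer(\frequency_j;\params) - \reduced{\transfer}(\frequency_j;\params)}_F^2\bigr)^{1/2}$, so it suffices to control each block.

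Next I would algebraically factor the per-frequency error in the spirit of the residual identity behind \cref{thm:forward_error}. Using $\reducedC = \projV^T\dotC$, $\reducedB = \projW^T\dotB$, the definition of $\M{T}_j$, and the trivial identity $\dotB = \M{T}_j \dotB + (\M{I}-\M{T}_j)\dotB = \descK(\frequency_j;\params)\projV\inv{\reducedK(\frequency_j;\params)}\projW^T\dotB + \Vgh_j$, one gets
\begin{equation*}
  \transfer(\frequency_j;\params) - \reduced{\transfer}(\frequency_j;\params)
  = \dotC^T \inv{\descK(\frequency_j;\params)}\bigl[\dotB - \descK(\frequency_j;\params)\projV\inv{\reducedK(\frequency_j;\params)}\projW^T\dotB\bigr]
  = \dotC^T \inv{\descK(\frequency_j;\params)}\Vgh_j.
\end{equation*}
Submultiplicativity of the Frobenius norm under the spectral norm then gives $\norm{\transfer(\frequency_j;\params) - \reduced{\transfer}(\frequency_j;\params)}_F \leq \norm{\dotC^T \inv{\descK(\frequency_j;\params)}}_2 \norm{\Vgh_j}_F$.

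Finally, I would pull the largest $\omega$-dependent constant out of the sum. Setting $\kappa(\params) = \max_{j=1,\ldots,\numfreq} \norm{\dotC^T \inv{\descK(\frequency_j;\params)}}_2$, which depends only on the full-order operators and the parameter, not on $\projV$ or $\projW$, we obtain
\begin{equation*}
  \norm{\allobservations(\params) - \reduced{\allobservations}(\params)}_F^2
  \leq \kappa(\params)^2 \sum_{j=1}^{\numfreq} \norm{\Vgh_j}_F^2
  = \kappa(\params)^2 \norm{\M{H}}_F^2,
\end{equation*}
which combined with the first display yields the claim. No step looks particularly hard; the only slight subtlety is the algebraic manipulation that rewrites the transfer-function error as $\dotC^T \inv{\descK}\Vgh_j$, and verifying that the constant $\kappa(\params)$ as defined truly has no hidden dependence on the projection bases (it does not, since $\dotC$ and $\descK(\frequency_j;\params)$ are full-order quantities).
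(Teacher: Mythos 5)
Your proposal is correct and follows essentially the same route as the paper's proof: reverse triangle inequality, the factorization $\dotC^T\bigl[\inv{\descK(\frequency_j;\params)} - \projV\inv{\reducedK(\frequency_j;\params)}\projW^T\bigr]\dotB = \dotC^T\inv{\descK(\frequency_j;\params)}\Vgh_j$, and pulling the maximal frequency-dependent constant out of the sum. The only (immaterial) difference is that you bound the product using the spectral norm of $\dotC^T\inv{\descK(\frequency_j;\params)}$ where the paper uses its Frobenius norm, which changes only the value of $\kappa(\params)$, not its independence from $\projV$ and $\projW$.
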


\begin{proof}
\begin{align*}
\abs{\,\norm{\residual}_F - \norm{\redresidual}_F\,}^2 &= \abs{\,\norm{\allobservations(\params) - \alldata}_F - \norm{\reduced{\allobservations}(\params) - \alldata}_F\,}^2\\
&\leq \norm{\allobservations(\params) - \reduced{\allobservations}(\params)}_F^2\\
&=\sum_{j=1}^{\numfreq} \norm{\dotC^T \left[ \inv{\M{K}(\frequency_j; \params)} - \projV \inv{\reduced{\M{K}}(\frequency_j; \params)} \projW^T \right] \dotB}_F^2\\
&= \sum_{j=1}^{\numfreq}\norm{\dotC^T \inv{\M{K}(\frequency_j; \params)}\left[ \M{I} - \M{T}_j \right] \dotB}_F^2 = 
\end{align*}
\begin{align*}
&= \sum_{j=1}^{\numfreq}\norm{\dotC^T \inv{\M{K}(\frequency_j; \params)} \Vgh_j}_F^2 \\
&\leq \sum_{j=1}^{\numfreq}\norm{\dotC^T \inv{\M{K}(\frequency_j; \params)}}_F^2 \norm{\Vgh_j}_F^2\\
&\leq \left(\max_{j=1,\ldots,\numfreq} \norm{\dotC^T \inv{\M{K}(\frequency_j; \params)}}_F\right)^2 \sum_{j=1}^{\numfreq} \norm{\Vgh_j}_F^2\\
&= \left(\max_{j=1,\ldots,\numfreq} \norm{\dotC^T \inv{\M{K}(\frequency_j; \params)}}_F\right)^2 \norm{\M{H}}_F^2 \\
&= \left(\kappa(\params) \norm{\M{H}}_F\right)^2.
\end{align*}
Taking square roots yields the desired bound.
\end{proof}

Since evaluating the condition number, $\kappa(\params)$, is equivalent in cost to evaluating the full order model (and thus undesirable), we use the objective function estimates from \cref{sec:GuideUpdates} to estimate the  necessary reduction in $\norm{\M{H}}_F$. Since $\kappa(\params)$ does not depend on $\projV$ or $\projW$, we are free to expand these projection bases to reduce the residual norm, $\norm{\M{H}}_F$.

For each $\frequency_j$, we compute the (thin) QR factorization $\descK(\frequency_j; \params) \projV = \M{Q}_j \widehat{\M{R}}_j$. Thus, $\M{Q}_j\M{Q}_j^T$ is the orthogonal projector onto $\range\left(\descK(\frequency_j; \params)\projV\right)$. From this, we have (dropping the $\frequency_j$ and $\params$ dependence for brevity):
\begin{align*}
\Vgh &= \M{Q}\M{Q}^T \Vgh + (\M{I} - \M{Q}\M{Q}^T) \Vgh\\
&= \M{Q}\M{Q}^T \left( \M{I} - \M{T} \right) \dotB +  (\M{I} - \M{Q}\M{Q}^T)\left( \M{I} - \M{T} \right) \dotB\\
&= \underbrace{\left( \M{Q}\M{Q}^T - \M{T} \right) \dotB}_{\subset \range\left(\descK\M{V}\right)} + \underbrace{\left(\M{I} - \M{Q}\M{Q}^T\right) \dotB}_{\subset \range\left(\descK\M{V}\right)^\perp}.
\end{align*}
The first term in the above decomposition represents the difference between the skew projector $\descK\projV \inv{\reducedK} \projW^T$ and the orthogonal projector $\descK\projV \M{\widehat{R}}^{-1}\M{Q}^T = \M{Q}\M{Q}^T$. Since this term is already in $\range \left(\descK \projV \right)$, we focus on $\left(\M{I} - \M{Q}\M{Q}^T\right) \dotB$ and in particular, we add vectors to $\projV$ from $\descK^{-1}\range\left(\left(\M{I} - \M{Q}\M{Q}^T\right) \dotB\right)$.

To minimize the number of basis vectors added (and thus the number of large linear solves), we use the singular value decomposition of $\left(\M{I} - \M{Q}\M{Q}^T\right) \dotB$ to determine the most significant components. Let $\M{U}\M{\Sigma}\M{Y}^T = \left(\M{I} - \M{Q}\M{Q}^T\right) \dotB$ be a singular value decomposition. We expand $\projV$ with $\left[ \descK^{-1}\V{u}_1, \ldots, \descK^{-1}\V{u}_r \right]$ for $r \ll \numcontrol$ that sufficiently reduces $\norm{(\M{I}-\M{Q}\M{Q}^T)\dotB}$. The Eckart-Young Theorem\cite{Eckart1936} shows that $\sum_{i=1}^{r} \sigma_i \V{u}_i \V{y}_i^T$ is the optimal rank-$r$ approximation in both the Frobenius and spectral norms. The relative error in this low-rank approximation in the Frobenius norm is given by $\sqrt{\dfrac{\sum_{i=r+1}^{\numcontrol} \sigma_i^2}{\sum_{i=1}^{\numcontrol} \sigma_i^2}}$. If we choose $r$ such that this relative error is less than some $\eps$ (e.g. $\eps = \frac{1}{10}, \frac{1}{20}, \frac{1}{100}$, etc.) and compute the QR factorization $\M{K}\left[\projV, \M{K}^{-1}\V{u}_1, \ldots, \M{K}^{-1}\V{u}_r\right] = \widetilde{\M{Q}} \widetilde{\M{R}}$, then it follows that $$\norm{\left[\M{I} - \widetilde{\M{Q}}\widetilde{\M{Q}}^T\right] \dotB}_F \leq \eps \norm{\left[\M{I} - \M{Q}\M{Q}^T\right] \dotB}_F \leq \eps \norm{\Vgh}_F.$$
We expect\footnote{Note that this is not guaranteed. The error term corresponding to the skew projection may increase in norm. However, the second term dominates the residual norm in our experiments. An analysis of how much the skew projection term can deteriorate is future work.} to see a similar factor $\eps$ improvement in $\norm{\Vgh_j}_F$ and hence in the interpolation error between the reduced-order and full-order models. In our experience, this is the case. This procedure is repeated for each $\frequency_i$ to compute the updates to $\projV$. 
This approach extends the residual norm-based approach in \cite{DrusSimZas_2014} in two important respects. First, it only considers the magnitude of the right hand side component in 
$\range(\M{K} \M{V})^{\perp}$, that is, the component that should be solved for (note that here all right hand sides have the same norm). Second, our approach does not consider residuals for specific right hand sides, but all possible residuals for vectors in $\range(\M{B})$. 

\begin{theorem}
\label{thm:residual_bound2}
Let $\Vgreek{\xi}_j = \left[\M{I} - \projV \inv{\reducedK(\frequency_j; \params)}\projW^T \descK(\frequency_j; \params)\right]^T \dotC$ and define $\M{\Xi} = [\Vgreek{\xi}_1, \ldots, \Vgreek{\xi}_{\numfreq}]$. Then $$\abs{\norm{\residual}_F - \norm{\redresidual}_F} \leq \mu(\params) \norm{\M{\Xi}}_F,$$ where 
$\mu(\params)$ does not depend on $\projV, \projW$.
\end{theorem}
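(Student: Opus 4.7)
The plan is to mirror the proof of \cref{thm:residual_bound}, changing only the algebraic factoring of $\inv{\descK(\omega_j;\params)} - \projV \inv{\reducedK(\omega_j;\params)}\projW^T$. In the previous theorem, the factor $\inv{\descK}$ was extracted on the \emph{left}, producing the residual $\Vgh_j = (\M{I} - \M{T}_j)\dotB$ acting on $\dotB$. Here, I would instead extract $\inv{\descK}$ on the \emph{right}, namely
\begin{equation*}
  \inv{\descK(\omega_j;\params)} - \projV\inv{\reducedK(\omega_j;\params)}\projW^T
  =
  \bigl[\M{I} - \projV\inv{\reducedK(\omega_j;\params)}\projW^T \descK(\omega_j;\params)\bigr]\,\inv{\descK(\omega_j;\params)}.
\end{equation*}
This is the natural dual of the previous identity and is what the definition of $\Vgreek{\xi}_j$ is set up to match.

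With that factoring in hand, I would start, exactly as in \cref{thm:residual_bound}, from the reverse triangle inequality $\bigl|\norm{\residual}_F - \norm{\redresidual}_F\bigr|^2 \leq \norm{\allobservations(\params) - \reduced{\allobservations}(\params)}_F^2$, then expand the right-hand side frequency-by-frequency:
\begin{equation*}
  \norm{\allobservations(\params) - \reduced{\allobservations}(\params)}_F^2
  =
  \sum_{j=1}^{\numfreq} \bigl\|\dotC^T\bigl[\inv{\descK(\omega_j;\params)} - \projV\inv{\reducedK(\omega_j;\params)}\projW^T\bigr]\dotB\bigr\|_F^2.
\end{equation*}
Applying the right-side factoring, each term becomes $\|\dotC^T[\M{I} - \projV\inv{\reducedK(\omega_j;\params)}\projW^T\descK(\omega_j;\params)]\inv{\descK(\omega_j;\params)}\dotB\|_F^2 = \|\Vgreek{\xi}_j^T \inv{\descK(\omega_j;\params)}\dotB\|_F^2$ by the definition of $\Vgreek{\xi}_j$ (the transpose in the definition is exactly what turns the row-vector expression into $\Vgreek{\xi}_j^T$).

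I would then apply submultiplicativity of the Frobenius norm together with $\norm{\M{X}^T}_F = \norm{\M{X}}_F$ to obtain $\|\Vgreek{\xi}_j^T \inv{\descK(\omega_j;\params)}\dotB\|_F \leq \|\Vgreek{\xi}_j\|_F\,\|\inv{\descK(\omega_j;\params)}\dotB\|_F$, pull the maximum over $j$ out of the sum, and identify
\begin{equation*}
  \mu(\params) = \max_{j=1,\ldots,\numfreq} \bigl\|\inv{\descK(\omega_j;\params)}\dotB\bigr\|_F,
\end{equation*}
which manifestly does not depend on $\projV$ or $\projW$. Taking square roots yields the claimed bound $|\,\norm{\residual}_F - \norm{\redresidual}_F\,| \leq \mu(\params)\norm{\M{\Xi}}_F$.

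There is no real obstacle here; the proof is a routine ``adjoint'' of the preceding one. The only thing to be careful about is the transpose bookkeeping in the definition of $\Vgreek{\xi}_j$ so that the Frobenius norms of $\Vgreek{\xi}_j$ and its transpose appear consistently, and the fact that $\mu(\params)$ plays the dual role to $\kappa(\params)$: where $\kappa$ bounded the left-hand factor $\dotC^T\inv{\descK}$, $\mu$ now bounds the right-hand factor $\inv{\descK}\dotB$. This is precisely what motivates using a complementary update strategy in which new directions are added to $\projW$ (rather than $\projV$) based on the adjoint residuals $\Vgreek{\xi}_j$.
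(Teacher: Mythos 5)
Your proof is correct and is exactly the "nearly identical, dual" argument the paper intends: the identity $\inv{\descK} - \projV\inv{\reducedK}\projW^T = [\M{I} - \projV\inv{\reducedK}\projW^T\descK]\inv{\descK}$ holds, the Frobenius-norm submultiplicativity step mirrors the one in \cref{thm:residual_bound}, and $\mu(\params) = \max_j \|\inv{\descK(\frequency_j;\params)}\dotB\|_F$ is indeed independent of $\projV,\projW$. The paper gives no separate proof beyond remarking that it is nearly identical to that of \cref{thm:residual_bound}, which is precisely what you have carried out.
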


The proof is nearly identical to that of \cref{thm:residual_bound} and a similar decomposition in terms of $\range(\descK^T \projW)$ and its orthogonal complement yield an update scheme for $\projW$.

\subsubsection{Example}

To illustrate the usefulness of this approach, we consider the first rejected step in the 2D 1-point ROM experiment in \cref{sec:Numerical_Experiments}.
As \cref{fig:update_resids} shows, the relative residual norm for each of the 32 columns of $\dotB$ (blue circles) is $O(10^{-4})$. However, the relative error in the objective function, $\frac{\norm{\romeval[0]{\params_p} - \trueeval[\params_p]}}{\norm{\trueeval[\params_p]}}$, is approximately 11.7\%. 
The singular values of $\left(\M{I} - \M{Q}\M{Q}^T\right) \dotB$ decay rapidly as shown in \cref{fig:updates_svs}. Therefore, the residual norm can be sufficiently decreased with the addition of very few singular vectors. As seen in \cref{fig:updates_reduction}, only two linear solves are needed for a 90\% reduction in relative error, 3 linear solves for a 95\% reduction, and 5 linear solves for a 99\% reduction.

With the addition of the three singular vectors corresponding to the largest three singular values of $\left(\M{I} - \M{Q}\M{Q}^T\right) \dotB$ (requiring three large linear solves), the relative residual norms are significantly reduced (orange circles in \cref{fig:update_resids}) and the relative error in the objective function is reduced to approximately $0.2\%$. 

\begin{figure}[ht]
\centering
\begin{subfigure}[b]{0.4\textwidth}
\includegraphics[width=\textwidth]{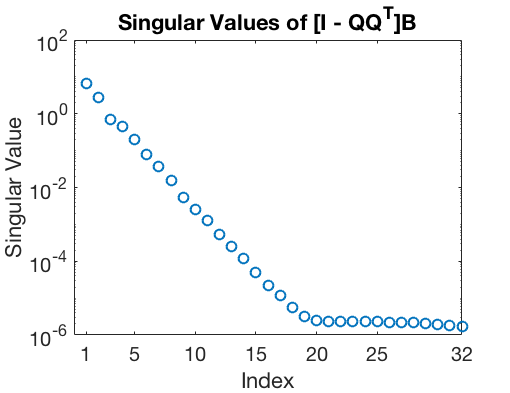}
\caption{Rapid Decay of Singular Values}
\label{fig:updates_svs}
\end{subfigure}
\begin{subfigure}[b]{0.4\textwidth}
\includegraphics[width=\textwidth]{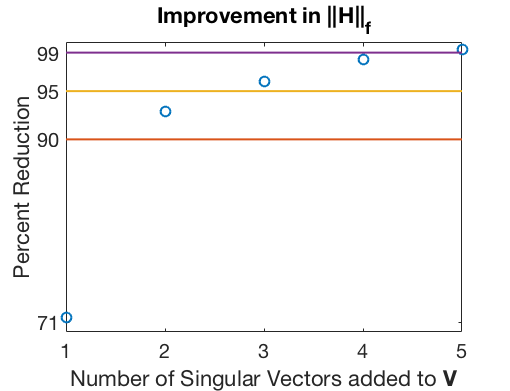}
\caption{Improvement in $\norm{\M{H}}_F$}
\label{fig:updates_reduction}
\end{subfigure}

\caption{Improvement in Error Bound at First Rejected Step in 2D - 1 Point Experiment}
\label{fig:updates_improvement}
\end{figure}

\begin{figure}[ht]
\includegraphics[width=\textwidth]{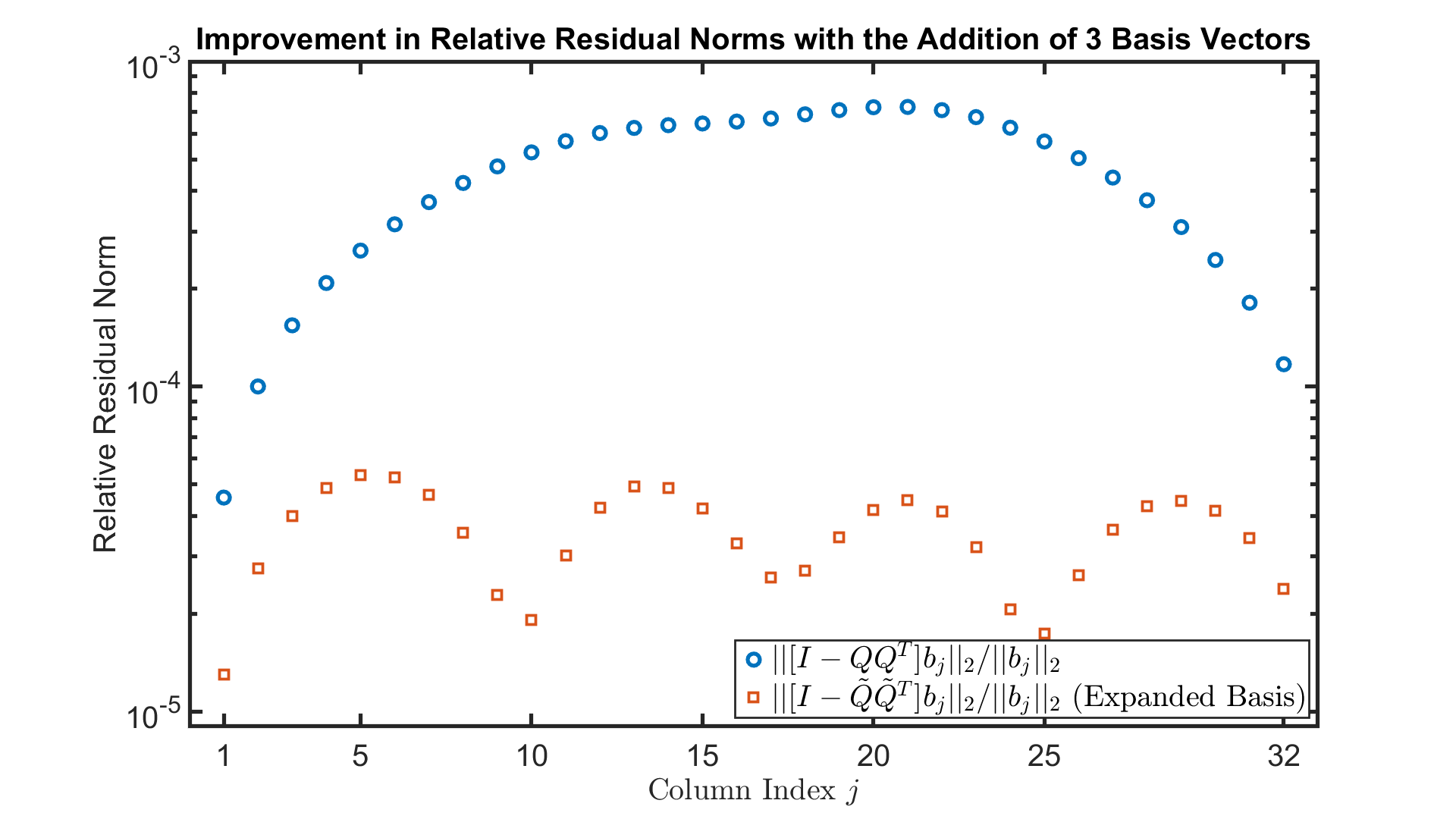}
\caption{Improvement in Relative Residual Norms}
\label{fig:update_resids}
\end{figure}
%
%


\section{Probabilistic Estimates\label{sec:Propabilistic_Estimates}}

Since each sample of the stochastic estimates of \cref{sec:GuideUpdates} requires the solution of one large linear system per frequency, selecting an appropriate number of samples is key to the efficiency and robustness of our proposed technique. A small number of samples increases the variance of the estimate which could trigger unnecessary updates or fail to detect a large difference between the reduced- and full-order objective functions. A large number of samples makes the algorithm more robust but requires solving more large linear systems. A detailed discussion on this efficiency-robustness trade-off is beyond the scope of this paper, but using the analysis outlined in \cite{Avron2011} and improved upon in \cite{Roosta-Khorasani2015}, we provide rigorous bounds on the number of samples required to detect a sufficiently large under-estimate of the objective function with a given probability.


We focus our attention on under-estimates of the objective function because a sufficiently large over-estimate will simply trigger an update to the model. While this would impact the efficiency of our method, over-estimates do not degrade the robustness. In contrast, successive under-estimates of the objective function could prevent updates and degrade our algorithm to traditional ROM-based parameter inversion. Therefore, we want to explore the number of samples required to make large under-estimates sufficiently unlikely.

To bound the number of samples required, we use an intermediate result from \cite{Roosta-Khorasani2015} which says that given $N \geq 6 \epsilon^{-2} \log(1 / \delta)$ samples of the Hutchinson estimator, denoted $\trace_{H}^N$, $P\big(\trace_{H}^N(\M{A})  \leq (1-\epsilon)\trace(\M{A})\big) < \delta$. That is, the probability of under-estimating the trace of $\M{A}$ by a relative factor of $\epsilon$ is less than $\delta$. 

Suppose we want to reject the reduced order model when $\romeval[k]{\params} > \probreject \trueeval[\params]$ for some $\alpha > 1$ with high probability and our rejection threshold is set to $\frac{\probreject}{K}$ for some $K > 1$, that is, we reject the reduced order model if $\estimate[\params] > \frac{\probreject}{K} \romeval[k]{\params}$. Let $\beta = \dfrac{\trueeval[\params]}{\romeval[k]{\params}}$. Then our model is rejected if 
\begin{align*}
    \estimate &\geq \frac{\probreject}{K} \romeval[k]{\params}\\
    &= \frac{\probreject}{K \probact} \trueeval\\
    &= \left(1 - \left(1 -  \frac{\probreject}{K \probact}\right) \right)\trueeval.
\end{align*}
This will occur with probability at least $1-\delta$ given $N \geq  6 \epsilon^{-2} \log(1 / \delta)$ samples where $\eps = \left(1 -  \frac{\probreject}{K \probact}\right)$. While $\probact$ is not typically available to us without a full function evaluation, we are interested in detecting cases where $\probact \geq \probreject$. In such cases, the minimum $\eps$ occurs at $\probreject = \probact$ with $\eps = \left(1 - \frac{1}{K} \right)$.

By choosing $K$ sufficiently large, we may detect such cases while requiring a modest number of samples. As an example, if $K=2$, then only $\lceil \frac{6}{\left(1 - \frac{1}{2}\right)^2} \log(2) \rceil = 17$ samples are required to guarantee a rejection probability, $\delta$, of at least 1/2. Increasing the number of samples to 34 yields a rejection probability of at least 3/4. Thus, by reducing the rejection threshold by a factor $\frac{1}{K}$ to provide a buffer and using a modest number of samples, we obtain strong lower bounds on the rejection probabilities.

\section{Numerical Experiments\label{sec:Numerical_Experiments}}

To demonstrate the value of our proposed techniques, we present a series of numerical experiments. To quantify the cost of each method, the experiments are evaluated by the number of large (i.e., the size of the full-order model) linear solves needed to drive the residual norm to 1.1 times the noise level. While there are computational costs for the update procedures (such as an SVD for the updates of \cref{subsec:Updates_Residual}), these involve linear algebra on systems multiple orders of magnitude smaller than the full-order model (e.g., $O(10^3)$ vs $O(10^5)$). Therefore, the dominant computational cost will be the large linear solves. Since the full-order systems require iterative solvers to be efficiently solved, the computational cost is (roughly) linear in the number of large solves.

We examine reconstructions for a two-dimensional and a three-dimensional tomography problem. For each, we compare the cost of reconstruction using the full-order model, using a reduced order model constructed by interpolating the first three full-order optimization steps (henceforth labeled 3-point), and using a reduced order model interpolating only the initial condition (henceforth labeled 1-point). 
For the reduced order models, we explore the accuracy and cost without updates, and we compare the resulting efficiency of the (exact) interpolatory updates of \cref{subsec:Updates_Full} with the residual-driven updates of \cref{subsec:Updates_Residual}.

These 1-point models are of particular interest because the same initial condition is (typically) used for each reconstruction. Thus, the solutions of $\inv{\descK(\frequency; \params)}\dotB$ and $\left(\descK(\frequency; \params)\right)^{-T}\dotC$ from this initial condition can be computed off-line and reused for any reconstruction based on the same mesh, source/detector location, and frequencies. This \emph{significantly} reduces the number of large-scale solves, especially for large problems. The number of large linear solves required beyond the off-line solves is henceforth referred to as the amortized cost.

The numerical experiments are constructed as follows. Synthetic data is generated by constructing a target anomaly\footnote{These anomalies are not exactly reconstructable in the PaLS basis, avoiding the ``inverse crime'' of using the same model for data generation and reconstruction.} in the 0-1 pixel basis. Background pixels are mapped to a background absorption level, $\mu_{out}$, while pixels belonging to the anomaly are mapped to a higher absorption value, $\mu_{in}$. A small inhomogeneity is added to the anomaly and the background and then measurements are then generated by evaluating the forward problem and adding white noise.

For these experiments, we use a symmetric, one-sided projection, i.e. $\projV = \projW$, and a single frequency, corresponding to $\frequency = 0$.


\subsection{2D Example}\label{subsec:Experiements_2D}

\newcommand{\twodseed}{20}
\newcommand{\noiseleveltwod}{1}

The 2D problem is evaluated on a $201 \times 201$ mesh, yielding $\numstate=40,401$ degrees of freedom for the large system. The standard centered finite difference scheme is used to discretize \cref{eq:DOT_PDE1,eq:DOT_PDE2,eq:DOT_PDE3,eq:DOT_PDE4}. We use 32 sources and 32 detectors arranged as shown in \cref{fig:geometries}. The true image and initial condition are shown in \cref{fig:2d_truth}.  We use 25 compactly-supported radial basis functions (CSRBF) for our PaLS reconstruction, yielding 100 (four per CSRBF) parameters to be estimated. The absorption values, $\mu_{out}$ and $\mu_{in}$, are set to 0.005 and 0.15, respectively. White noise is added to the measurements at a level of {\noiseleveltwod} permille. To quantify the impact of the random nature of the trace estimates of \cref{sec:Propabilistic_Estimates}, each experiment is repeated 103 times\footnote{A number of the form $4k+3$ was chosen to ensure that the median, 25th, and 75th percentiles were elements of the set.} and the minimum, 25th, 50th, 75th percentiles, and maximum number of large linear solves are reported in \cref{table:2D_results}. \Cref{fig:2d_reconstruction} shows the reconstructions for a representative noise realization. \Cref{fig:2D_convergence} details the convergence history of the 3-point ROM without updates and the 3-point ROM with the (exact) interpolatory updates of \cref{subsec:Updates_Full}.

\begin{figure}[ht]
\begin{center}
\includegraphics[width=\textwidth]{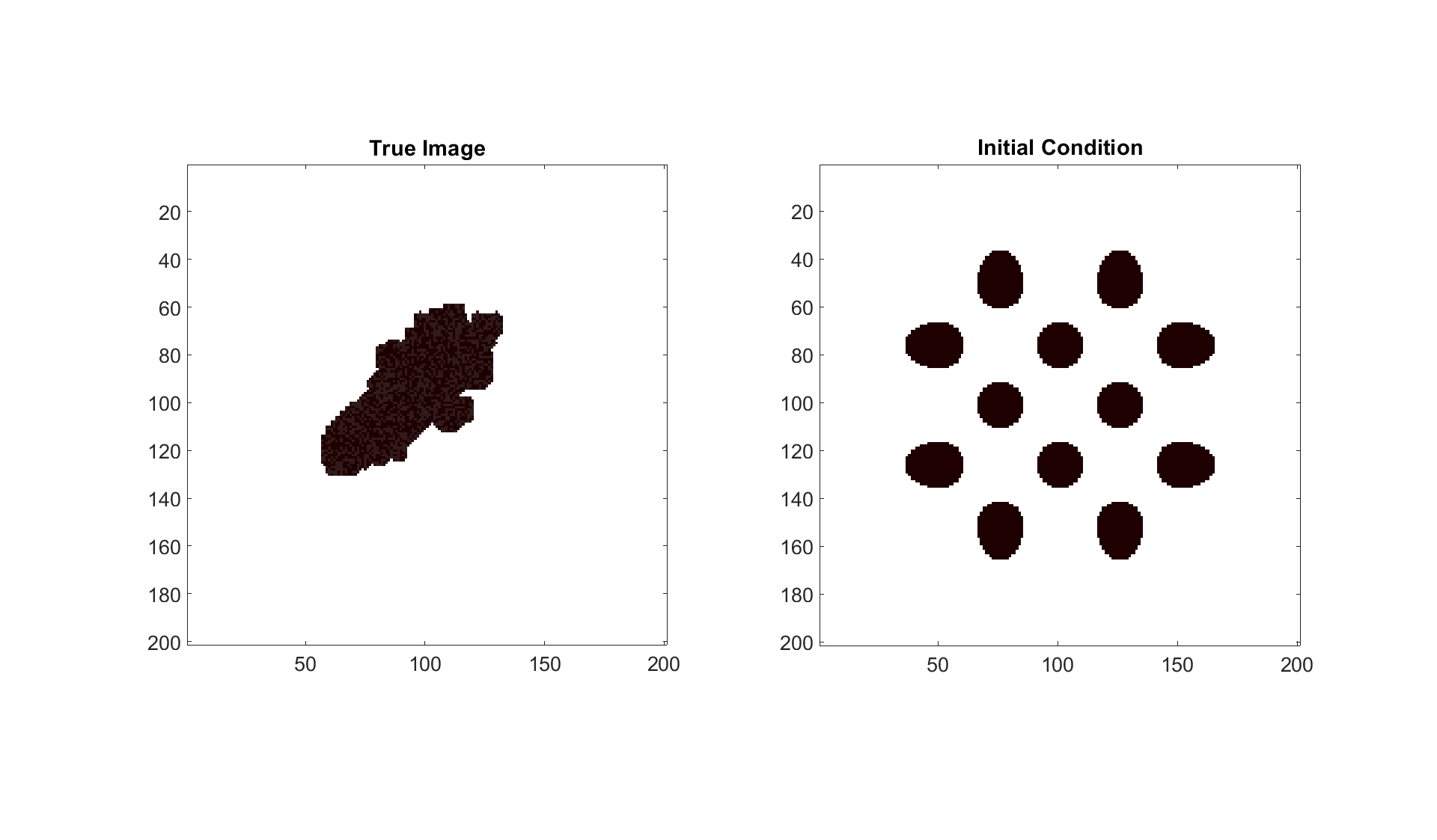}
\end{center}
\caption{True Image for the 2D Model and Initial Condition}
\label{fig:2d_truth}
\end{figure}

\subsubsection{Choice of Update Location}

As noted in \cref{sec:BasisUpdates}, there is a choice of updating using the currently accepted point, $\params_\mathrm{c}$, the proposed step $\params_\mathrm{p}$, or both if the current step is rejected. 

After the model is updated, \cref{alg:rom-optim} returns to $\params_\mathrm{c}$ to compute a new function evaluation and Jacobian. If interpolatory updates (\cref{subsec:Updates_Full}) are used to update at $\params_\mathrm{c}$, then this function evaluation and Jacobian will be exact (identical to those from the full-order model) according to \cref{thm:interpolation}. This guarantees that the trust region search direction for the reduced-order model is identical to the direction chosen for the full-order model at that point.

However, \cref{table:2D_results} demonstrates that using updates at $\params_\mathrm{c}$, in all but one setup, requires more large solves in the median minimization than using updates at $\params_\mathrm{p}$. We conjecture that this occurs for the following reason. In order for $\params_\mathrm{c}$ to have been accepted previously, the reduced-order model evaluated there, $\romeval[k]{\params_\mathrm{c}}$, is likely to be sufficiently close to the estimate $\estimate[\params_\mathrm{c}]$. Similarly, for an update to occur, the reduced-order model at the proposed point, $\romeval[k]{\params_\mathrm{p}}$, and the estimate, $\estimate[\params_\mathrm{p}]$, are likely to be sufficiently different. Thus, we expect that the error between the full-order and reduced-order models is larger at $\params_\mathrm{p}$ than at $\params_\mathrm{c}$. This suggests that updating at $\params_\mathrm{p}$ will yield more improvement in the reduced order model.

Since updating the reduced-order model at $\params_\mathrm{p}$ required an equal number or fewer large linear solves in the majority of our experiments, with nearly identical quality results, we use updates at $\params_\mathrm{p}$.


\subsubsection{Results}

\begin{table}[ht]%
\begin{tabular}{|l|c|c|c|c|c|}%
\hline%
\multirow{2}{*}{Experiment Setup}&Median Solves&\multicolumn{4}{c|}{Percentiles}\\%
\cline{3%
-%
6}%
&{[}Amortized{]}&Min&25&75&Max\\%
\hline%
\multirow{2}{*}{Full Order Model}&992&704&896&1120&2240\\%
&{[}928{]}&{[}640{]}&{[}832{]}&{[}1056{]}&{[}2176{]}\\%
\hline%
\multirow{2}{*}{ROM - 3 Point - Interpolatory Updates at $\V{x}_\mathrm{c}$}&279&207&274&283&373\\%
&{[}215{]}&{[}143{]}&{[}210{]}&{[}219{]}&{[}309{]}\\%
\hline%
\multirow{2}{*}{ROM - 3 Point - Interpolatory Updates at $\V{x}_\mathrm{p}$}&279&207&274&283&360\\%
&{[}215{]}&{[}143{]}&{[}210{]}&{[}219{]}&{[}296{]}\\%
\hline%
\multirow{2}{*}{ROM - 3 Point - No Updates}&192&192&192&192&192\\%
&{[}128{]}&{[}128{]}&{[}128{]}&{[}128{]}&{[}128{]}\\%
\hline%
\multirow{2}{*}{ROM - 3 Point - Residual-Driven Updates at $\V{x}_\mathrm{c}$}&219&207&215&223&245\\%
&{[}155{]}&{[}143{]}&{[}151{]}&{[}159{]}&{[}181{]}\\%
\hline%
\multirow{2}{*}{ROM - 3 Point - Residual-Driven Updates at $\V{x}_\mathrm{p}$}&218&207&215&223&244\\%
&{[}154{]}&{[}143{]}&{[}151{]}&{[}159{]}&{[}180{]}\\%
\hline%
\multirow{2}{*}{ROM - 1 Point - Interpolatory Updates at $\V{x}_\mathrm{c}$}&302&213&229&308&444\\%
&{[}238{]}&{[}149{]}&{[}165{]}&{[}244{]}&{[}380{]}\\%
\hline%
\multirow{2}{*}{ROM - 1 Point - Interpolatory Updates at $\V{x}_\mathrm{p}$}&240&214&230&302&311\\%
&{[}176{]}&{[}150{]}&{[}166{]}&{[}238{]}&{[}247{]}\\%
\hline%
\multirow{2}{*}{ROM - 1 Point - Residual-Driven Updates at $\V{x}_\mathrm{c}$}&138&107&133&140&155\\%
&{[}74{]}&{[}43{]}&{[}69{]}&{[}76{]}&{[}91{]}\\%
\hline%
\multirow{2}{*}{ROM - 1 Point - Residual-Driven Updates at $\V{x}_\mathrm{p}$}&132&101&125&135&145\\%
&{[}68{]}&{[}37{]}&{[}61{]}&{[}71{]}&{[}81{]}\\%
\hline%
\end{tabular}%
\caption{Number of Large Linear Solves for the 2D Problem}%
\label{table:2D_results}%
\end{table}

\begin{figure}
\centering
\begin{subfigure}[b]{0.8\textwidth}
\includegraphics[width=\textwidth]{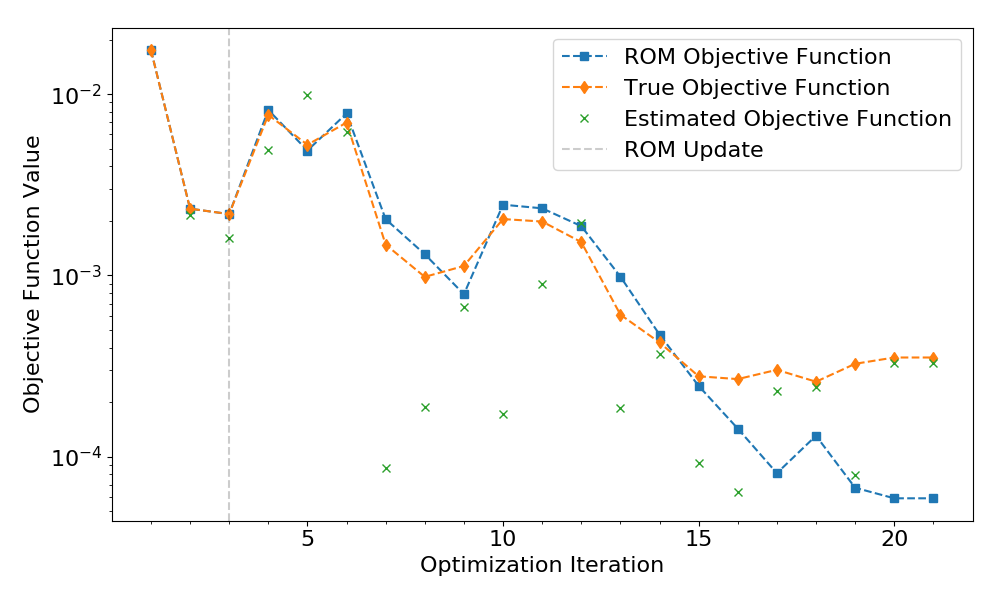}
\caption{No Updates}
\label{fig:2D_convergence_none}
\end{subfigure}
\begin{subfigure}[b]{0.8\textwidth}
\includegraphics[width=\textwidth]{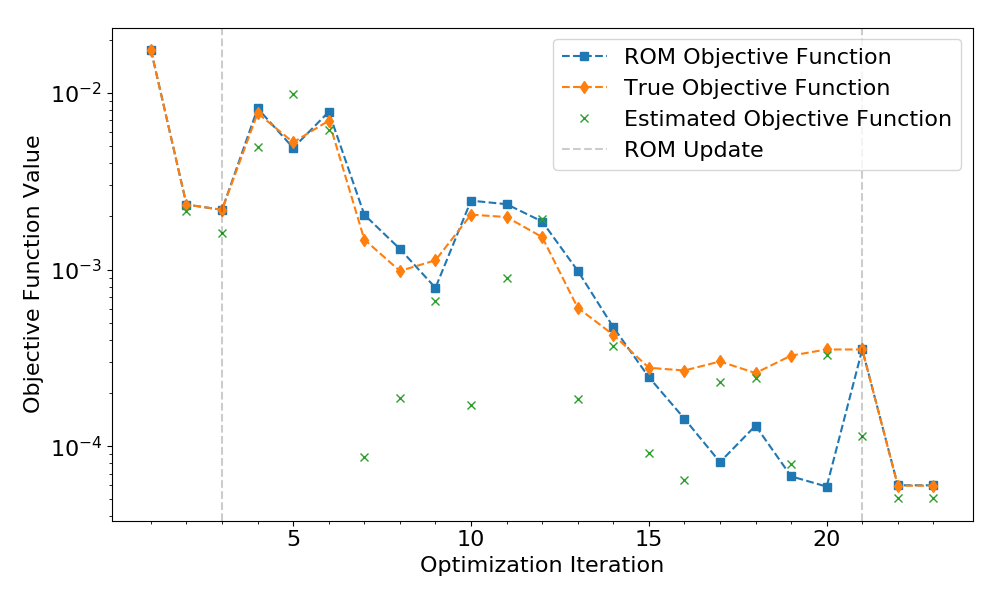}
\caption{Interpolatory Updates}
\label{fig:2D_convergence_updates}
\end{subfigure}
\caption{Convergence behavior of 3-point ROM without updates and 3-point ROM with Interpolatory Updates for a representative realization of the 2D experiment}
\label{fig:2D_convergence}
\end{figure}

\begin{figure}
    \centering
    \includegraphics[width=0.8\textwidth]{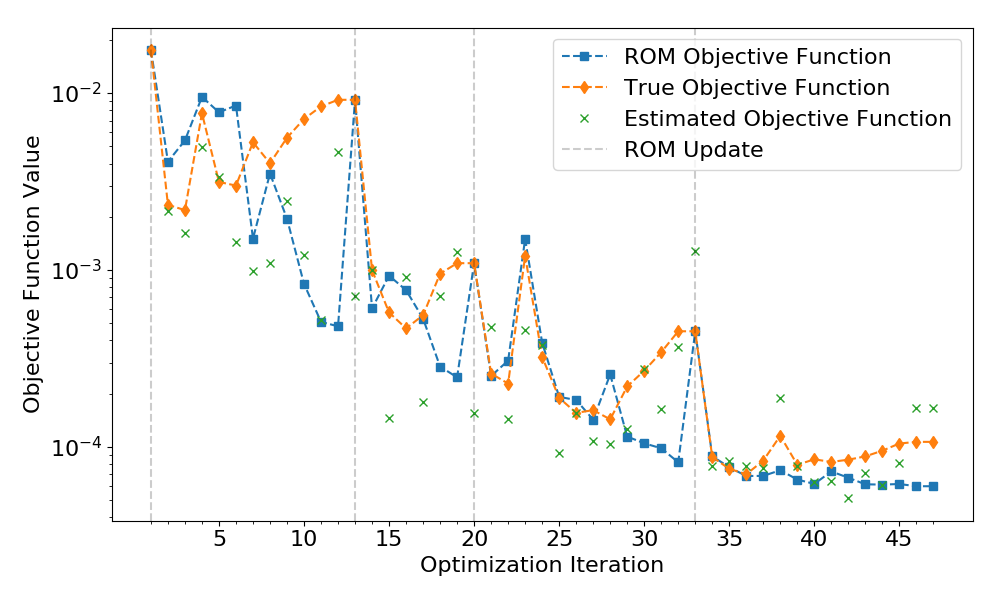}
    \caption{Convergence history of the 1-point ROM with Residual-Driven updates for a representative realization of the 2D experiment}
    \label{fig:2d_convergence_residual}
\end{figure}

\begin{figure}
    \centering
    \includegraphics[width=0.99\textwidth]{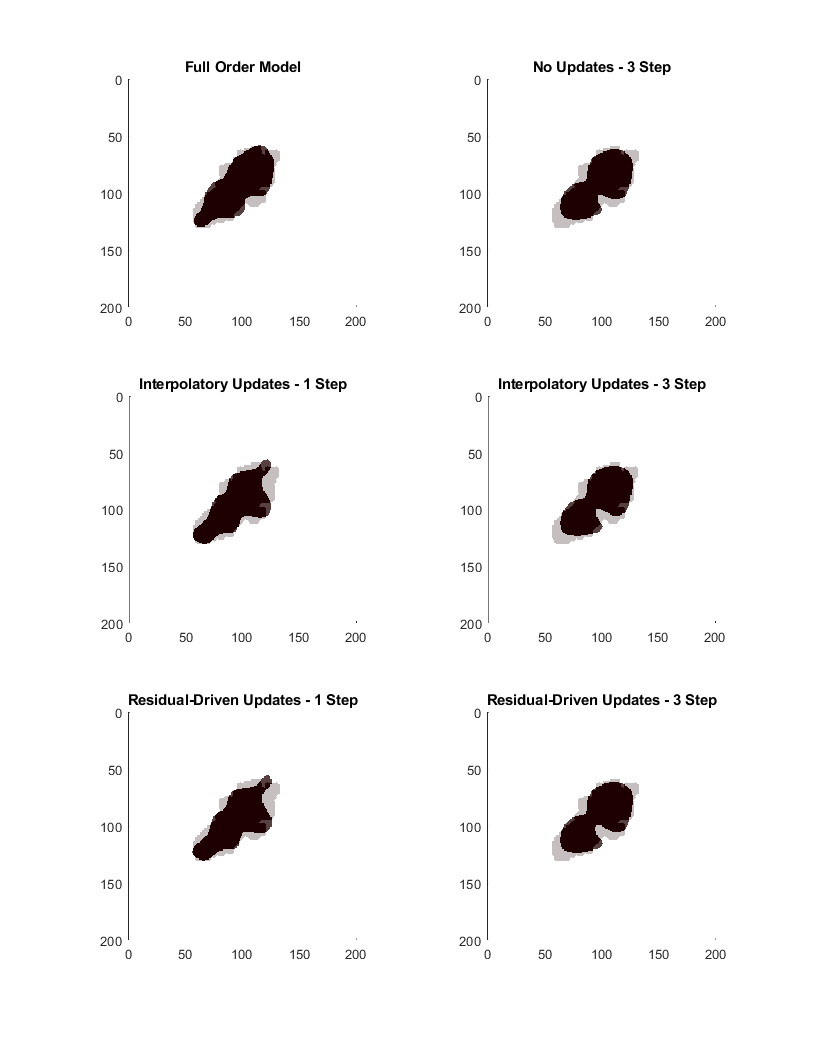}
    \caption{2D Reconstruction Results}
    \label{fig:2d_reconstruction}
\end{figure}

From \cref{table:2D_results}, we see that all the ROM-based methods result in a significant reduction in the number of large solves compared with the full order model. However, while the reconstruction using the 3-point ROM without updates (\cref{fig:2d_reconstruction}) is visually similar to that generated using the full-order model, \cref{fig:2D_convergence_none} shows that the (full-order) objective function is approximately 10 times larger than the 3-point reduced objective function at the final iteration. As seen in \cref{fig:2D_convergence_updates}, this discrepancy is detected by the probabilistic estimates at iteration 20, and the ROM is updated. For the remaining iterations, the ROM objective function closely matches the (full-order) objective function, and the optimizer terminates with the full-order objective function within the desired tolerance.

Thus, our error detection and update procedures are successful. However, the interpolatory update scheme incurs a non-trivial cost, approximately 45\% more solves in the median case or nearly 68\% more solves in the median amortized case. By using the residual-driven update scheme of \cref{subsec:Updates_Residual}, the costs are reduced to approximately 14\% more solves in the median case or approximately 21\% more solves in the median amortized case. We note that these extra costs do provide an increase in robustness.

Despite using a significantly smaller initial reduced-order model, the 1-point models are able to produce reconstructions of a similar quality to the 3-point models (as seen in \cref{fig:2d_reconstruction}) at the cost of multiple updates, but still with a significant reduction in number of large linear solves. A sample convergence history with three updates is seen in \cref{fig:2d_convergence_residual}. While using interpolatory update scheme with the 1-point models does require fewer large linear solves than the interpolatory update scheme used with the 3-point models, the residual-driven update scheme offers a significantly larger reduction compared with all other methods. The residual-driven update scheme required approximately 31\% \emph{fewer} large linear solutions in the median case and approximately 47\% \emph{fewer} solves in the median amortized case (that is, where the solutions from the initial conditions are re-used between optimizations).

\subsection{3D}\label{subsec:Experiements_3D}

\newcommand{\threedseed}{20}
\newcommand{\noiselevelthreed}{1}

The 3D problem is evaluated on a $64 \times 64 \times 64$ mesh, yielding $\numstate=\num{262144}$ degrees of freedom for the large system. The standard centered finite difference scheme is used to discretize \cref{eq:DOT_PDE1,eq:DOT_PDE2,eq:DOT_PDE3,eq:DOT_PDE4}. We use 961 sources and 961 detectors arranged as shown in \cref{fig:geometries}. The true image (before adding the random inhomogeneities) is shown in \cref{fig:3d_truth}. Note that this image is not exactly reconstructable using our choice of parameterization. We use 27 compactly-supported radial basis functions (CSRBF) for our PaLS reconstruction, yielding 135 (five per CSRBF) parameters. As in \cref{subsec:Experiements_2D}, the absorption values, $\mu_{out}$ and $\mu_{in}$, are set to 0.005 and 0.15, respectively. White noise is added to the measurements at a level of {\noiselevelthreed} permille. To quantify the impact of the random nature of the trace estimates of \cref{sec:Propabilistic_Estimates}, each experiment is repeated 15 times and the minimum, 25th, 50th, 75th percentiles, and maximum number of large linear solves are reported in \cref{table:3D_results}. \Cref{fig:3d_reconstruction} shows the reconstructions for a representative noise realization\footnote{Note that the anomaly is correctly localized in each case. Using additional frequencies may resolve the shape more finely.}.

\begin{figure}[ht]
\centering
\begin{subfigure}[b]{\textwidth}
\includegraphics[width=\textwidth]{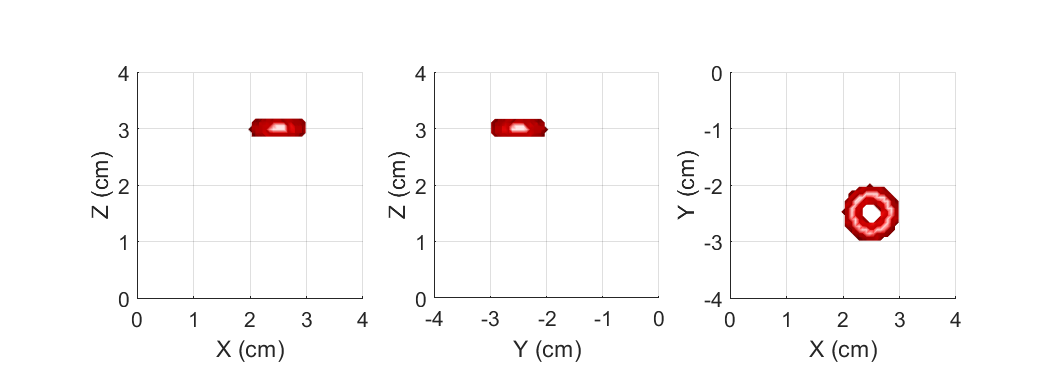}
\caption{Truth Image}
\end{subfigure}
\begin{subfigure}[b]{\textwidth}
\includegraphics[width=\textwidth]{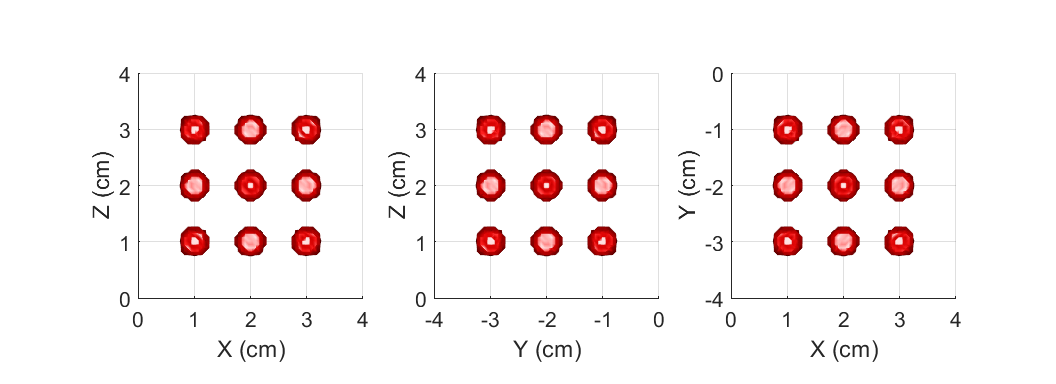}
\caption{Initial Condition}
\end{subfigure}
\caption{True Image for the 3D Model and Initial Condition}
\label{fig:3d_truth}
\end{figure}

\subsubsection{Results}

\begin{table}[ht]%
\centering
\begin{tabular}{|l|c|c|c|c|c|}%
\hline%
\multirow{2}{*}{Experiment Setup}&Median Solves&\multicolumn{4}{c|}{Percentiles}\\%
\cline{3%
-%
6}%
&{[}Amortized{]}&Min&25&75&Max\\%
\hline%
\multirow{2}{*}{Full Order Model}&23064&16337&22103&24025&26908\\%
&{[}21142{]}&{[}14415{]}&{[}20181{]}&{[}22103{]}&{[}24986{]}\\%
\hline%
\multirow{2}{*}{ROM - 3 Point - Interpolatory Updates at $\V{x}_\mathrm{p}$}&5776&5774&5775&5776&7704\\%
&{[}3854{]}&{[}3852{]}&{[}3853{]}&{[}3854{]}&{[}5782{]}\\%
\hline%
\multirow{2}{*}{ROM - 3 Point - No Updates}&5766&5766&5766&5766&5766\\%
&{[}3844{]}&{[}3844{]}&{[}3844{]}&{[}3844{]}&{[}3844{]}\\%
\hline%
\multirow{2}{*}{ROM - 3 Point - Residual-Driven Updates at $\V{x}_\mathrm{p}$}&5776&5774&5775&5777&5797\\%
&{[}3854{]}&{[}3852{]}&{[}3853{]}&{[}3855{]}&{[}3875{]}\\%
\hline%
\multirow{2}{*}{ROM - 1 Point - Interpolatory Updates at $\V{x}_\mathrm{p}$}&3871&3858&3867&3881&5798\\%
&{[}1949{]}&{[}1936{]}&{[}1945{]}&{[}1959{]}&{[}3876{]}\\%
\hline%
\multirow{2}{*}{ROM - 1 Point - Residual-Driven Updates at $\V{x}_\mathrm{p}$}&1985&1953&1965&1993&2006\\%
&{[}63{]}&{[}31{]}&{[}43{]}&{[}71{]}&{[}84{]}\\%
\hline%
\end{tabular}%
\caption{Number of Large Linear Solves for the 3D Problem}%
\label{table:3D_results}%
\end{table}

\begin{figure}
    \centering
    \includegraphics[angle=-90,width=\textwidth]{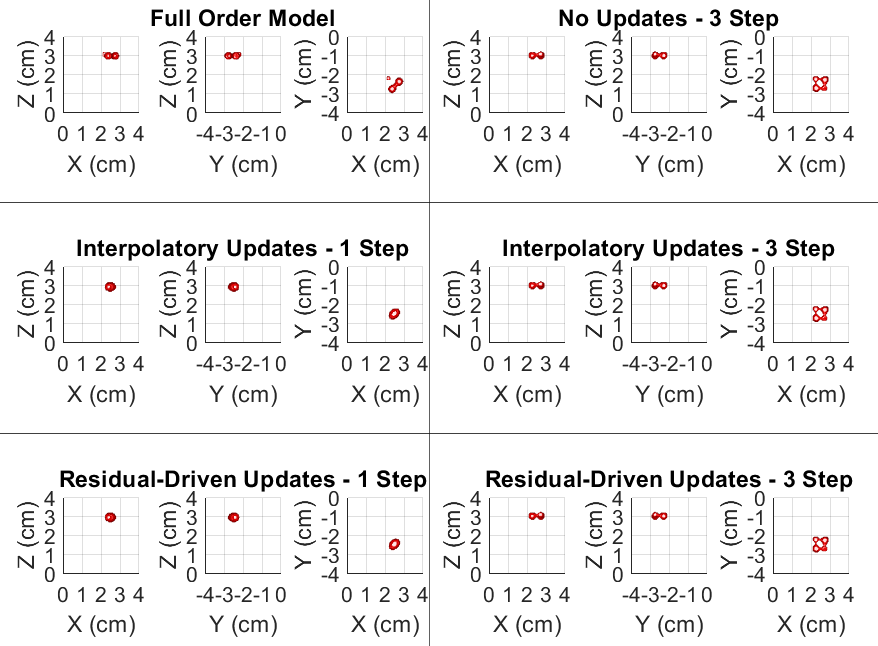}
    \caption{3D Reconstruction Results}
    \label{fig:3d_reconstruction}
\end{figure}

The 3D results are very similar to the 2D results discussed in \cref{subsec:Experiements_2D}, but the improvements are even more striking. The 3-point models do not require updates in most cases, resulting in a small increase in the number of large linear solutions required due to the objective function estimation. However, there are significant savings when using the 1-point reduced-order model. Even with interpolatory updates, there is a reduction of approximately 32\% in the median case and a reduction of approximately 49\% in the median amortized case. Using residual-driven updates results in a reduction of approximately 66\% in the median case and a reduction of approximately 98\% in the median amortized case. Thus, in the amortized case, our proposed technique requires a factor 50 fewer large linear solutions than the 3-point reduced-order model and results in a more than 300-fold reduction compared with the full-order model.

\section{Conclusions and Future Work}\label{sec:Conclusions}

We have shown that the use of stochastic trace estimation enables the estimation of reduced order model quality for the DOT problem. This allows us to adaptively update the model during the optimization. With a modest cost increase, a larger initial reduced-order model can use these estimates for robustness. 
We have also presented an update scheme for projection-based model reduction that is able to significantly reduce the relative error in the objective function at the cost of just a handful of large linear solutions compared with an interpolatory update ($O(10)$ vs $O(\numobservable+\numcontrol)$). By starting with a lower order initial reduced-order model, using stochastic estimates to guide adaptivity, and updating using our residual-driven scheme, we obtain a method that is very efficient and robust. 

The use of stochastic estimates and updates allows the user to choose a balance of efficiency and robustness. For the purposes of this paper, we choose to focus on efficiency, but this choice may vary on a per-application basis.
To reduce the cost of sampling, the stochastic estimates could be performed only at accepted (by the trust-region) steps instead at every trial point. This may require more accurate estimates and hence requires balancing between fewer sampling locations and more samples per location. 
In \cref{subsec:Experiements_2D}, experimental evidence suggests updating at $\params_\mathrm{p}$ was usually more efficient than updating at $\params_\mathrm{c}$, but further analysis is necessary. Furthermore, extending the projection bases using solutions from both points could reduce the number of updates required at the cost of more expensive updates.
In our work, we use the same number of samples for each objective function estimate. Robustness could be improved by increasing the number of samples taken (and thus reducing the variance of the estimate) as the optimizer approaches convergence.

As discussed in \cref{subsec:Background_TR} and \cref{sec:Propabilistic_Estimates}, we believe that these estimates combined with the theory developed in \cite{Beattie2012} can be used to show convergence with high probability in an extension of the (trust region) conditional model framework. This will be explored in future work along with using the bounds of \cref{sec:Propabilistic_Estimates} to extend the ``error-aware'' trust-region method in \cite{Yue2013}.


\section{Acknowledgements}
We thank Misha Kilmer for the use of the PaLS code \cite{Aghasi2011a}
and Serkan Gugercin and Chris Beattie for many helpful discussions.

\bibliographystyle{siamplain}
\bibliography{library_RobParmInv}
\end{document}